\newcommand{\lam}{\lambda}
\newcommand{\he}{\theta}
\newcommand{\tr}{\mbox{tr}}
\newcommand{\st}{\mbox{st}\,}
\title{Unique builders for  classes of matrices \footnote{MSC 2010 Classification: 15A23, 81P45, 94A15}
}
\author{
 Ted
 Hurley\footnote{National Universiy of Ireland Galway, email:
 Ted.Hurley@NuiGalway.ie }}
\date{}
\begin{document}

\maketitle




\begin{abstract}  
Basic matrices are defined which provide unique  building blocks for the class of normal matrices which include the classes of unitary and Hermitian matrices. 
Unique  builders for quantum logic gates are hence derived as a quantum logic gates is  represented by, or is said to be, a  unitary matrix.  
An efficient algorithm for expressing an idempotent as a unique sum of rank $1$  idempotents with increasing initial zeros is derived. This is used to derive a unique form for mixed matrices. 
A number of (further) applications are given: for example  (i) $U$ is a  symmetric unitary matrix if and only if it has the form $I-2E$ for a symmetric idempotent $E$, (ii) a formula for the pseudo inverse in terms of basic matrices is derived.  Examples  for various uses are readily available. 

\end{abstract}

\section{Introduction}

{\em Basic}  matrices are defined and it is
shown that any normal  matrix is the product of  basic commuting 
matrices  and that the  product is {\em unique} apart from the  order. 
A  matrix $B$ is normal when  $BB^*=B^*B$ where $B^*$ denote the complex conjugate transposed of $B$. The class of normal matrices include the classes of unitary matrices ($UU^*=I$), and Hermitian, also called self-adjoint,  matrices ($H^*=H$). These occur in many applications: for example quantum logic gates are represented by unitary matrices and their properties and applications depend ultimately on the structure of unitary matrices.

Each basic matrix itself is a product of {\em minimal basic} matrices with the same  eigenvalue.
A  basic matrix is expressed   in terms  of  a symmetric 
 idempotent matrix;  
 an idempotent $E$ occurring  in  the expression $B$  as a product  of basic matrices has the property that $B$ acts on $E$  
by $BE=\al E$ and the 
eigenvalue $\al $ occurs with multiplicity equal to  the rank of $E$.

An efficient algorithm is given for expressing a rank $r$ idempotent matrix as a sum of $r$ rank $1$ (pure) orthogonal idempotents with increasing initial zeros and such an  expression is unique. From this a unique expression for a mixed matrix as a sum of rank $1$ idempotents of this type is  derived. 


 A quantum logic gate is 
 represented
by a unitary matrix  and these  gates are the basis for
quantum information theory, see  for example \cite{nielsen}. Indeed a quantum logic gate is often stated to be a unitary matrix itself. 
A quantum logic gate is thus a unique, apart from order,  product
of {\em basic} logic gates and these basic logic
gates are building blocks for quantum 
logic gates in general.   

Examples are readily available and applications are given throughout:
It is shown that  a unitary matrix $U$ is symmetric if and only if it has the form $U=I-2E$ for a symmetric idempotent $E$. An easy  formula for the roots and powers of the matrices  follows directly from its expression as a product of basic matrices; powers and roots  are given explicitly in terms of basic matrices. A formula for the pseudo inverse is immediate. 


Expressions  for  well-known quantum gates (such as Pauli, Hadamard gates) as products of unique basic matrices   
are explicitly derived in section \ref{open}.  

Comparisons may  be made with the famous 1D factorization theorem of Belevich and Vaidyanathan which derives building blocks for 1D paraunitary matrices,  \cite{1D} pp. 302-322.     
The  basic matrices derived here for
building normal matrices are influenced by methods  in 
\cite{tedbarry} for constructing generators/builders for multidimensional {\em paraunitary} matrices\footnote{A paraunitary matrix is a matrix
$U(\bf z)$ such that $U({\bf z})U^*({\bf z^{-1})}=I$ where ${\bf z}=(z_1,z_2,\ldots,z_k)$ and ${\bf z^{-1}} =
  (z_1^{-1},z_2^{-1}, \ldots, z_k^{-1})$.}
, and in particular for constructing paraunitary
 non-separable (entangled) matrices. 




A summary of the main results  is given   in Section \ref{summary}.  Further notation from  Section \ref{back} may be consulted  as required. 

\subsection{Summary}\label{summary} 

Let $B$ be a normal matrix. Then $B$ is a product 
$B= \prod_{j=1}^k(I- E_j+ \al_j E_j)$ the $\al_j$ are {\em distinct} and $S=\{E_1, E_2, \ldots, E_k\}$ is an orthogonal symmetric set of idempotents.   Each $(I-E_j+\al E_j)$ is termed a {\em basic matrix} and the product  is unique apart  from the order of these commuting  basic matrices.  

$B$ acts on the idempotents by  $BE_j=\al_j E_j$ and so $\al_j $ is an eigenvalue of $B$ occurring to multiplicity equal to $\rank E_j$. When $S$ is not a complete set then $E=
(I-\sum_{j=1}^kE_k)$ completes the set $S$ and $1$ occurs as an eigenvalue of $B$ with multiplicity equal to $(n- \rank(\sum_{j=1}^kE_j)) = n - (\sum_{j=1}^k\rank E_j)$ where $n$ is the size of the matrices.

A basic matrix $(I- E +\al E)$ is the product of minimal basic matrices  $\prod_{j=1}^k(I-E_j+\al E_j)$ where the $E_j$ are mutually orthogonal, have rank $1$ and each minimal basic has the same eigenvalue $\al$ as the original. 

The idempotent matrix $E$ is of  the form $E=\sum_{j=1}^tu_ju_j^*$ for mutually orthogonal unit column vectors $u_j$. Given an idempotent matrix $E$ of rank $r$ an efficient algorithm is given for expressing $E$ as a sum of such rank $1$ idempotents with increasing initial zeros so that the expression obtained is unique. 
Mixed density matrices have a particularly useful  form when viewed as products of basic matrices. A mixed density matrix is defined as a convex sum of pure density matrices though not in a unique way. By writing the mixed matrix  as a product of basic matrices, uniqueness is obtained and this gives a unique perspective on mixed matrices. Further each basic matrix is a unique product of ordered rank $1$ basics matrices giving a unique expression for a mixed density matrix.  See Section \ref{mixed}.

Unitary matrices are normal matrices with eigenvalues of the form $e^{i\theta}$. Quantum logic gates are represented by unitary matrices; indeed a quantum logic gate is often {\em defined}  to be a   unitary matrix. The expression of a unitary matrix as a unique
product, except for order,  of basic matrices then  expresses a quantum logic gate as a unique product of basic quantum logic gates. Each basic quantum logic gate is a product of minimal quantum logic gates with the same eigenvalue.  



A number of easy consequences are derived. A formula  
for the pseudo inverse of the matrix follows directly from the expression of the  matrix as a product of basic matrices.  
  It is shown that  $U$ is a symmetric unitary matrix if and only if $U=I-2E$ for a idempotent $E$; thus building symmetric unitary matrices from  idempotent  matrices is straight forward.
Many of the mostly used quantum logic gates are symmetric.
In Section \ref{root} a useful direct formula for writing down all the roots of a normal matrix  is derived giving in particular  a useful direct formula for  roots of  a unitary matrix/logic gate. 
Section \ref{open} finds expressions for  common logic gates as products of  basic logic gates. 
Section \ref{build1} discusses techniques for building normal matrices including unitary matrices from basic matrices.

  There are easily constructed interesting examples; those displayed are of small size but the constructions can be efficiently applied to large size matrices. 

 \subsection{Additional  notation}\label{back} Necessary background on 
algebra may be   found in  many algebra and  linear algebra books.
Background on quantum information theory may be found in \cite{nielsen}. 

$R$ denotes a ring with identity $I_R$; the suffix $_R$ may  be
omitted when a particular ring $R$ is understood. A mapping $^*: R\to R$ in which $r\mapsto r^*, (r\in R)$ 
is said to be an {\em involution} on $R$ if and only if (i) $r^{**} = (r^*)^* = 
r, \, \forall r\in R$, (ii) $(a+b)^* =
a^*+b^*, \, \forall a,b \in R$, and 
(iii) $(ab)^* = b^*a^* , \, \forall a,b \in R$. 
Let $R$ be a ring with involution $^*$. An element  $a\in R$ is said to be {\em symmetric}, with respect to $^*$, if $a^*=a$.  An idempotent in $R$ is an element $E$ such that $E^2=E$. $E,F$ are said to be orthogonal in $R$ if $EF=FE=0_R$. The set  $\{E_1, E_2, \ldots, E_k\}$ is said to be a {\em complete set of orthogonal idempotents} in $R$ if each element is an idempotent, the $E_i$ are mutually orthogonal and $E_1+E_2 + \ldots + E_k = I_R$. The set is further said to be symmetric if each $E_i$ is symmetric (with respect to $^*$).

Here we  work in the field of complex numbers $\C$ although many
of the results work over other systems but are not included.  
For $a\in \C$,
$a^*$ denotes the complex conjugate of $a$ and  then $A^*$ denotes the
complex conjugate transposed of $A$ for $A\in \C_{n\ti m}$ or $A\in
\C^n$. Now $I_n$ denotes the identity $n\ti n$ matrix; the suffix $_n$ will be omitted when the size is clear. 
  An $n\ti n$  matrix $B$ is said to be a normal matrix in $\C$ if and only if $BB^*=B^*B$. 
An $n\ti n$  matrix is unitary if and only if $UU^*=I_n$. An $n\ti n$ matrix is Hermitian (or self-adjoint) if and only if $H^*=H$. Thus unitary and Hermitian matrices are normal matrices. Unitary matrices by their definition are invertible but an Hermitian matrix may not be invertible. 


Column vectors $u,v$ are orthogonal if $u^*v=0$;  $n\ti n$ matrices
are orthogonal if $AB^* = 0_{n\ti n} = BA^*$; in all cases here orthogonality will refer to symmetric matrices. For a column $n\ti 1$ vector $v\neq 0$,  $vv^*$ is a
symmetric $n\ti n$ matrix  and the matrix is necessarily of rank $1$. 
When
$v$ is a unit vector ($v^*v=1$) then $vv^*$ is an idempotent matrix as
then $vv^*(vv^*)^* = vv^*vv^* = vv^*$. Suppose $v,w$ are orthogonal
vectors. Then the matrices $vv^*$ and $ww^*$ are orthogonal matrices as $vv^*ww^*
= 0_{n\ti n}$ since $v^*w=0 $.





\section{Basic Matrices}\label{basic}
A matrix $A$ is normal if and only if there exists a unitary matrix $P$ such that $P^*AP=D$ where $D$ is a diagonal matrix. Normal matrices include   Hermitian matrices ($H^*=H$) and unitary matrices ($UU^*=I$); Hermitian matrices include real symmetric matrices. 
Eigenvalues of a unitary matrix have the form $e^{i\theta}$ and an Hermitian matrix has real eigenvalues. 

\begin{proposition}\label{gen} Let $A$ be an $n\ti n$ matrix. There exists a unitary matrix $P$ such that  $P^*AP=D$, where $D$ is diagonal, if and only if $A=\al_1v_1v_1^*+ \al_2v_2v_2^* + \ldots + \al_nv_nv_n^*$ for an orthogonal set of unit vectors $\{v_i | i=1,2, \ldots, n\}$.
\end{proposition}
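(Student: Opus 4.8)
The plan is to prove both implications by translating between a unitary matrix $P$ and the orthonormal set formed by its columns. Write $P=[v_1\mid v_2\mid\cdots\mid v_n]$ with $v_i\in\C^n$ the $i$-th column. The one fact that drives everything is that $P$ is unitary, i.e.\ $P^*P=I_n$, exactly when $v_i^*v_j=\delta_{ij}$ for all $i,j$ --- that is, when $\{v_1,\dots,v_n\}$ is an orthonormal set, and hence (being $n$ orthonormal vectors in $\C^n$) an orthonormal basis. Equivalently, $P^*v_j=e_j$, the $j$-th standard basis column, for each $j$.

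First I would do the forward direction: assume $P^*AP=D$ with $D$ diagonal, say $D=\operatorname{diag}(\al_1,\dots,\al_n)$, so $A=PDP^*$. Since $DP^*$ has rows $\al_1v_1^*,\dots,\al_nv_n^*$ while $P$ has columns $v_1,\dots,v_n$, multiplying out gives $A=PDP^*=\sum_{i=1}^n\al_iv_iv_i^*$, and the $v_i$ are orthonormal because $P$ is unitary; this is exactly the claimed form.

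For the converse, assume $A=\sum_{i=1}^n\al_iv_iv_i^*$ with $\{v_i\}$ an orthonormal set, and put $P=[v_1\mid\cdots\mid v_n]$, which is unitary by the remark above. For each $j$, $P^*(v_jv_j^*)P=(P^*v_j)(v_j^*P)=e_je_j^*$, the matrix unit with a single $1$ in the $(j,j)$ position. Summing over $j$, $P^*AP=\sum_{j=1}^n\al_j e_je_j^*=\operatorname{diag}(\al_1,\dots,\al_n)$, which is diagonal, as required.

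The hard part will be essentially nothing: the statement is a reformulation of the definition of being diagonalizable by a unitary change of basis, and the argument is bookkeeping. The only thing to watch is keeping the index correspondences straight --- the $i$-th column of $P$ pairing with the $i$-th diagonal entry of $D$ --- and recording the identity $P^*v_j=e_j$, from which both the expansion $PDP^*=\sum_i\al_iv_iv_i^*$ and its reverse read off at once.
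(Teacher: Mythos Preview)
Your proof is correct. The paper does not give a formal proof of this proposition; it only remarks afterward that ``the $v_i$ consist of the columns of $P$ and the $\al_i$ are the diagonal entries of $D$,'' which is precisely the correspondence you set up and verify in both directions.
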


This can  be restated as follows:

\begin{proposition}\label{gen1} Let $A$ be an $n\ti n$ matrix. Then $A$ is a normal matrix  if and only if $A=\al_1v_1v_1^*+ \al_2v_2v_2^* + \ldots + \al_nv_nv_n^*$ for an orthogonal set of unit vectors $\{v_i | i=1,2, \ldots, n\}$.
\end{proposition}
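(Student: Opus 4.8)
The plan is to obtain this as an immediate reformulation of Proposition~\ref{gen}, using the standard fact recalled just above it that an $n\ti n$ matrix is normal if and only if it is unitarily diagonalizable. Thus there is essentially nothing new to prove: it suffices to translate the statement ``there is a unitary $P$ with $P^*AP=D$ for some diagonal $D$'' into the statement ``$A=\al_1v_1v_1^*+\cdots+\al_nv_nv_n^*$ for an orthogonal set of unit vectors $\{v_1,\dots,v_n\}$'', and this translation is just the passage between a unitary matrix and its list of columns.

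First I would set up that dictionary. Given a unitary $P$, write $P=[v_1\,|\,v_2\,|\,\cdots\,|\,v_n]$ by columns; the identity $P^*P=I$ says precisely that $\{v_1,\dots,v_n\}$ is an orthogonal set of unit vectors. If $D=\operatorname{diag}(\al_1,\dots,\al_n)$, then $P^*AP=D$ is equivalent to $A=PDP^*$, and expanding the right-hand side columnwise gives $A=PDP^*=\sum_{i=1}^{n}\al_i (Pe_i)(Pe_i)^*=\sum_{i=1}^{n}\al_i v_iv_i^*$. Conversely, starting from any expression $A=\sum_{i=1}^n\al_i v_iv_i^*$ with the $v_i$ orthonormal, form $P$ with columns $v_1,\dots,v_n$; then $P$ is unitary and $P^*AP=\operatorname{diag}(\al_1,\dots,\al_n)$. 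So $A$ has the stated sum-of-rank-one form exactly when $A$ is unitarily diagonalizable.

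To finish I would invoke the equivalence ``$A$ normal $\iff$ $A$ unitarily diagonalizable'': the forward direction is the spectral theorem for normal matrices, and the reverse is the one-line computation that if $A=PDP^*$ then $AA^*=PDD^*P^*=PD^*DP^*=A^*A$ because diagonal matrices commute. Chaining this with the dictionary above yields the proposition. I do not expect any real obstacle; the only points to keep in mind are that we genuinely need the spectral theorem in its ``normal'' (not merely Hermitian) form so that the $\al_i$ may be arbitrary complex numbers, and that here the $\al_i$ need not be distinct — the decomposition is not yet the unique one promised in the summary, which is obtained afterwards by grouping the $v_iv_i^*$ with a common $\al_i$ into a single idempotent.
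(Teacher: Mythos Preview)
Your proposal is correct and is exactly the approach the paper takes: Proposition~\ref{gen1} is introduced in the paper simply as a restatement of Proposition~\ref{gen} via the standard fact (recalled at the start of the section) that $A$ is normal if and only if $P^*AP=D$ for some unitary $P$, with the $v_i$ being the columns of $P$ and the $\al_i$ the diagonal entries of $D$. Your write-up actually supplies more detail than the paper, which gives no explicit proof at all.
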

 
The $v_i$ consist of the columns of $P$ and the $\al_i$ are the diagonal entries of $D$ in Proposition \ref{gen}  and are eigenvalues of $A$. Now $\{v_1,v_2, \ldots, v_n\}$ is an orthonormal basis for $\C_n$. Define $E_i=v_iv_i^*$. Then $\{E_1,E_2, \ldots, E_n\}$ is an orthogonal symmetric set of idempotents. The set is also a complete set of idempotents for if not then $E=I-\sum_{i=1}^nE_i \neq 0$ and $E$ is orthogonal to each of the $E_i$; this would give a linearly independent set of $n+1$ vectors in the $n$-dimensional space $\C_n$.

Now $\tr A$ denotes the trace of the matrix $A$ which is the sum of its  diagonal elements. One  nice property of an idempotent matrix is that its rank is the same as its trace, see for example \cite{idemrank}. 
\begin{lemma}\label{rank} Let  $E,F$ be  orthogonal symmetric idempotent
 $n\ti n$ matrices. Then
\begin{enumerate} \item $E+F$ is a symmetric  idempotent matrix. 
  \item  If $E+F \neq I$, then $I-E-F$ is a symmetric  idempotent matrix orthogonal to both $E$ and $F$. 
  \item\label{3}  $\rank (E+F) = \rank E + \rank F$.
  \item\label{4}  $\rank (I-E-F) = n- \rank E -\rank F$. 
    \end{enumerate}
\end{lemma}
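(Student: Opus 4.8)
The plan is to prove the four parts in order, using the fact (quoted just above the lemma) that the rank of a symmetric idempotent equals its trace, together with the elementary properties of an involution recalled in Section \ref{back}.

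\medskip

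\noindent\textbf{Part 1.} First I would verify that $E+F$ is symmetric: $(E+F)^* = E^* + F^* = E+F$ since $^*$ is additive and $E,F$ are symmetric. Then I would check idempotency by expanding $(E+F)^2 = E^2 + EF + FE + F^2$; using $E^2=E$, $F^2=F$ and the orthogonality hypothesis $EF=FE=0$, this collapses to $E+F$. So $E+F$ is a symmetric idempotent.

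\medskip

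\noindent\textbf{Part 2.} Assuming $E+F\neq I$, set $G = I-E-F$. Symmetry of $G$ is immediate from symmetry of $E+F$ (Part 1) and $I^*=I$. For idempotency, expand $G^2 = (I-E-F)^2 = I - 2(E+F) + (E+F)^2$, and by Part 1 $(E+F)^2 = E+F$, so $G^2 = I-(E+F) = G$. Since $G\neq 0$ (this is exactly the hypothesis $E+F\neq I$), $G$ is a genuine symmetric idempotent. For orthogonality with $E$: $GE = (I-E-F)E = E - E^2 - FE = E - E - 0 = 0$, and similarly $EG = 0$; the same computation with $F$ in place of $E$ gives $GF = FG = 0$.

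\medskip

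\noindent\textbf{Parts 3 and 4.} For part \ref{3}, I would invoke the quoted fact that for a symmetric idempotent the rank equals the trace. By Part 1, $E+F$ is a symmetric idempotent, hence $\rank(E+F) = \tr(E+F) = \tr E + \tr F = \rank E + \rank F$, using linearity of the trace. For part \ref{4}, if $E+F\neq I$ then by Part 2, $I-E-F$ is a symmetric idempotent, so $\rank(I-E-F) = \tr(I-E-F) = \tr I - \tr E - \tr F = n - \rank E - \rank F$; and in the remaining case $E+F = I$ the left side is $\rank(0) = 0$, which matches $n - \rank E - \rank F$ by part \ref{3}.

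\medskip

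\noindent I do not anticipate any real obstacle here: the whole argument is a short sequence of formal manipulations with $^*$-properties, orthogonality, and the rank-equals-trace identity for idempotents. The only point needing a word of care is that the rank-trace identity is being applied to the \emph{symmetric} idempotents produced in Parts 1 and 2 (so that the hypotheses of the cited result are genuinely met), and that in part \ref{4} the degenerate case $E+F=I$ is handled separately so that the stated formula holds without further restriction.
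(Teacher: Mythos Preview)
Your proof is correct and follows essentially the same approach as the paper: Parts 1 and 2 are dismissed there as ``straight forward,'' and Parts \ref{3} and \ref{4} are handled exactly via $\rank = \tr$ for idempotents together with linearity of trace. One minor remark: the rank-equals-trace identity cited in the paper holds for \emph{any} idempotent matrix, so your caveat about needing symmetry for that step is unnecessary (though harmless).
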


\begin{proof} The proofs of the first two items are straight forward.
  Proof of \ref{3}: It is known that $\rank E = \tr E $ for an idempotent matrix $E$, see for example \cite{idemrank}. 
  Now $E+F$ is an idempotent and so $\rank (E+F) = \tr (E+F) = \tr E + \tr F = \rank E + \rank F$.
The proof of \ref{4} is similar. 
\end{proof}

Lemma \ref{rank} may be generalised as follows: 
\begin{lemma}\label{trrank} Let  $\{E_1,E_2, \ldots, E_s\}$ be a
  set of orthogonal symmetric  idempotent matrices. Then
  \begin{itemize} \item  $(E_1+E_2 +\ldots + E_s)$ is an idempotent symmetric
    matrix.
  \item  If $(E_1+E_2 +\ldots + E_s) \neq I$ then $I- (E_1+E_2 +\ldots + E_s) $ is a symmetric  idempotent orthogonal to each $E_j$ for $ j= 1,2,\ldots, s$.
  \item $\rank (E_1+E_2+ \ldots + E_s)
 = \tr (E_1+E_2+ \ldots + E_s)   = \tr E_1+ \tr
 E_2+ \ldots + \tr E_s = \rank E_1+ \rank E_2 + \ldots +\rank
 E_s$.
\item  $\rank (I- (E_1+E_2 +\ldots + E_s))  = n- \rank E_1 -\rank E_2 -\ldots - \rank E_s$.
  \end{itemize}
\end{lemma}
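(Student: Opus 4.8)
The plan is to prove Lemma~\ref{trrank} by induction on $s$, leaning entirely on the base case Lemma~\ref{rank}, which already handles $s=2$ (and $s=1$ is trivial). The key observation that makes the induction go through is that the class of ``orthogonal symmetric idempotents'' is closed under taking partial sums: if I set $F = E_1 + E_2 + \ldots + E_{s-1}$, I need to know that $F$ is a symmetric idempotent and that $F$ is orthogonal to $E_s$, so that the pair $\{F, E_s\}$ satisfies the hypotheses of Lemma~\ref{rank}.

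First I would establish, as the inductive step for item one, that $F = E_1 + \ldots + E_{s-1}$ is a symmetric idempotent: symmetry is clear since $^*$ is additive, and $F^2 = \sum_{i,j} E_iE_j = \sum_i E_i^2 = F$ because the cross terms vanish by mutual orthogonality and the diagonal terms collapse by idempotency; this also shows $(E_1 + \ldots + E_s)$ is a symmetric idempotent. Next I would check $F E_s = \bigl(\sum_{i=1}^{s-1} E_i\bigr) E_s = \sum_{i=1}^{s-1} E_i E_s = 0$ and similarly $E_s F = 0$, so $F$ and $E_s$ are orthogonal. Applying Lemma~\ref{rank}(1) to the pair $\{F, E_s\}$ reproves that $F + E_s = E_1 + \ldots + E_s$ is a symmetric idempotent, and applying Lemma~\ref{rank}(2) gives that $I - (E_1 + \ldots + E_s)$, when nonzero, is a symmetric idempotent orthogonal to both $F$ and $E_s$; orthogonality to each individual $E_j$ for $j \le s-1$ follows because $(I - F - E_s)E_j = (I-F-E_s)E_j$ and $E_j = F E_j$ lies in the range killed by $I - F - E_s$ — more directly, $(I - \sum_i E_i)E_j = E_j - E_j = 0$ by orthogonality, which is the cleanest way to see it and avoids the induction altogether for the orthogonality claim.

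For the rank identities I would invoke the fact $\rank E = \tr E$ for any idempotent matrix (cited as \cite{idemrank} and already used in Lemma~\ref{rank}). Since $E_1 + \ldots + E_s$ has just been shown to be idempotent, $\rank(E_1 + \ldots + E_s) = \tr(E_1 + \ldots + E_s) = \tr E_1 + \ldots + \tr E_s = \rank E_1 + \ldots + \rank E_s$ using linearity of the trace; this is a one-line argument and does not even need the induction. Likewise $I - (E_1 + \ldots + E_s)$ is idempotent (when nonzero; when it is zero the formula $n - \sum \rank E_i = 0$ still holds), so $\rank\bigl(I - \sum E_i\bigr) = \tr\bigl(I - \sum E_i\bigr) = \tr I - \sum \tr E_i = n - \sum \rank E_i$.

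I do not expect a genuine obstacle here — the lemma is a routine generalization of Lemma~\ref{rank}. The only point requiring a moment's care is verifying that $(E_1 + \ldots + E_s)^2 = E_1 + \ldots + E_s$ directly rather than bootstrapping through the $s=2$ case; expanding the square and separating diagonal from off-diagonal terms makes this immediate, so in fact the whole proof can be written without formal induction, simply by expanding sums and applying the trace-equals-rank fact once. If one prefers the inductive presentation, the single thing to be careful about is stating the inductive hypothesis to include \emph{all four} bullet points (idempotency of the partial sum, idempotency of its complement, and the two rank formulas) so that the step has everything it needs.
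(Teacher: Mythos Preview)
Your proposal is correct and matches the paper's intent: the paper gives no separate proof of Lemma~\ref{trrank}, merely introducing it as the evident generalization of Lemma~\ref{rank}, so your reduction to the $s=2$ case by induction (or, as you note, the direct expansion plus the trace-equals-rank fact) is exactly what is implicitly expected.
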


\begin{lemma}\label{mut1} Let $\{E_1,E_2, \ldots, E_k\}$ be mutually
 orthogonal symmetric idempotent matrices. Then
 $(I-E_1+\al_1E_1)(I-E_2+\al_2E_2) \ldots
 (I-E_k+\al_kE_k) =  \al_1E_1+\al_2E_2  + \ldots + \al_kE_k + (I- E_1 -E_2 - \ldots - E_k)$.
\end{lemma}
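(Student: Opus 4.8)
The plan is to prove the identity by a direct expansion of the product, relying only on idempotency $E_j^2=E_j$ and the mutual orthogonality $E_iE_j=0$ for $i\neq j$; no appeal to the spectral theorem or to Lemma \ref{trrank} is needed. First I would rewrite each factor in the compact form $I-E_j+\al_jE_j = I+(\al_j-1)E_j$, so that the left-hand side becomes $\prod_{j=1}^k\bigl(I+(\al_j-1)E_j\bigr)$.

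The key step is to observe what survives when this product is multiplied out: every monomial involving a factor $E_iE_j$ with $i\neq j$ vanishes by orthogonality, and any repeated factor $E_jE_j$ collapses to $E_j$ by idempotency, so the only terms left are the empty product $I$ and the single-index terms $(\al_j-1)E_j$. To make this precise I would argue by induction on $k$. The base case $k=1$ is just $I-E_1+\al_1E_1 = \al_1E_1+(I-E_1)$. For the inductive step, assume $\prod_{j=1}^{k-1}(I-E_j+\al_jE_j) = \sum_{j=1}^{k-1}\al_jE_j + \bigl(I-\sum_{j=1}^{k-1}E_j\bigr)$ and multiply on the right by $I+(\al_k-1)E_k$; using $E_iE_k=0$ for $i<k$, $E_k^2=E_k$, and hence $\bigl(I-\sum_{j=1}^{k-1}E_j\bigr)E_k=E_k$, all cross terms cancel except for a single contribution $(\al_k-1)E_k$, which yields $\sum_{j=1}^{k-1}\al_jE_j + (\al_k-1)E_k + \bigl(I-\sum_{j=1}^{k-1}E_j\bigr) = \sum_{j=1}^{k}\al_jE_j + \bigl(I-\sum_{j=1}^{k}E_j\bigr)$.

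Finally I would simply rearrange $I+\sum_{j=1}^k(\al_j-1)E_j = \sum_{j=1}^k\al_jE_j + \bigl(I-\sum_{j=1}^kE_j\bigr)$, which is the asserted form. I do not expect any genuine obstacle here: the only care needed is the bookkeeping of the cross terms, every one of which is annihilated by mutual orthogonality. As a sanity check one may also note that the summands $I-\sum_{j=1}^kE_j$, $\al_1E_1,\ldots,\al_kE_k$ on the right-hand side are pairwise orthogonal, so the result is already exhibited in the \emph{spectral} decomposition form used throughout the paper.
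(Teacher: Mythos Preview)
Your proof is correct. The paper actually states this lemma without proof, treating the identity as routine; your induction on $k$ using $E_iE_j=0$ for $i\neq j$ and $E_j^2=E_j$ is precisely the natural verification one would supply, and the rewriting $I-E_j+\al_jE_j=I+(\al_j-1)E_j$ makes the bookkeeping clean.
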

The following follows directly from Lemma \ref{mut1}. 
\begin{proposition}\label{help} Let $A$ be an $n\ti n$ matrix with  
  $A=\al_1E_1+ \al_2E_2 + \ldots + \al_nE_n$ where $\{E_i | i=1,2,\ldots, n\}$ is a complete orthogonal set of symmetric idempotents. Then  $A= (I-E_1+\al_1E_1)(I-E_2+\al_2E_2) \ldots
  (I-E_n+\al_nE_n) $.
\end{proposition}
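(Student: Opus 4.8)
The plan is to read this off directly from Lemma~\ref{mut1}. Taking $k = n$ there, and using that $\{E_1, E_2, \ldots, E_n\}$ is a mutually orthogonal symmetric set of idempotents, we obtain
\[
(I - E_1 + \al_1 E_1)(I - E_2 + \al_2 E_2)\cdots(I - E_n + \al_n E_n) = \al_1 E_1 + \al_2 E_2 + \cdots + \al_n E_n + \bigl(I - E_1 - E_2 - \cdots - E_n\bigr).
\]
This is where \emph{completeness} of the set is used: $E_1 + E_2 + \cdots + E_n = I$, so the trailing term $I - (E_1 + \cdots + E_n)$ vanishes. The right-hand side then collapses to $\al_1 E_1 + \cdots + \al_n E_n = A$, which is the claim.

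Since everything rests on Lemma~\ref{mut1}, I would at least record why that holds, as a sanity check. Each factor may be written $I + (\al_j - 1)E_j$, so the product is $\prod_{j=1}^{k}\bigl(I + (\al_j - 1)E_j\bigr)$; multiplying out produces a sum, over subsets $T \subseteq \{1, \ldots, k\}$, of terms $\bigl(\prod_{j \in T}(\al_j - 1)\bigr)\,\bigl(\prod_{j \in T} E_j\bigr)$, with the empty product equal to $I$. Mutual orthogonality forces $\prod_{j \in T} E_j = 0$ as soon as $|T| \ge 2$, while $E_j^2 = E_j$ takes care of the singletons; so only $T = \emptyset$ and the singletons $T = \{j\}$ contribute, giving $I + \sum_{j}(\al_j - 1)E_j = I - \sum_j E_j + \sum_j \al_j E_j$, which is exactly the stated form. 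Alternatively one can induct on $k$, using Lemma~\ref{rank} at each step to see that $I - E_1 - \cdots - E_k$ is again a symmetric idempotent orthogonal to $E_{k+1}$.

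There is essentially no obstacle: the proposition is a direct specialization of Lemma~\ref{mut1}, and the only point needing care is not to forget that the hypothesis ``complete'' is precisely what removes the extra idempotent summand. If anything deserves a remark, it is that the required data --- a normal $A$ written as $\sum \al_i E_i$ over a complete orthogonal symmetric set of idempotents --- genuinely exists, which is guaranteed by Proposition~\ref{gen1} and the observation (in the paragraph following it) that the $E_i = v_i v_i^*$ form such a complete set, so the hypothesis here is not vacuous.
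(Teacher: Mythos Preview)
Your proof is correct and matches the paper's approach exactly: the paper simply states that the proposition follows directly from Lemma~\ref{mut1}, and you have spelled out precisely how, including the role of completeness in killing the $I - \sum_j E_j$ term. Your added justification of Lemma~\ref{mut1} is a bonus the paper does not include, but it is correct and does no harm.
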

 
A {\em basic} matrix is one of the form $(I-E+\al E)$ where $E$ is an idempotent. This has eigenvalue $\al$ occurring to multiplicity equal to $\rank E$ and has eigenvalue $1$ occurring to multiplicity equal to $\rank (I- E) = n-\rank E$, where $n$ is the size of $E$. When $\al \neq 0$,  it  has inverse $(I-E+\al^{-1}E)$. 
\begin{lemma}\label{join} Let  $E,F$  be orthogonal symmetric
  idempotent matrices. Then 
$(I-E+\al E)(I-F+\al F) = I- (E+F)+ \al (E+F)$
\end{lemma}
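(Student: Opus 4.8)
The plan is to obtain this as an immediate special case of Lemma \ref{mut1}. Taking $k=2$ there and setting $\al_1 = \al_2 = \al$ gives
\[
(I-E+\al E)(I-F+\al F) = \al E + \al F + (I - E - F) = I - (E+F) + \al(E+F),
\]
which is exactly the claimed identity. So strictly speaking nothing new needs to be proved; one only has to observe that the hypotheses of Lemma \ref{mut1} (mutual orthogonality of symmetric idempotents) are precisely the hypotheses here.

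If one prefers a self-contained verification, I would instead expand directly. Writing each factor in the form $I - E + \al E = I + (\al-1)E$ and likewise for $F$, the product becomes
\[
\bigl(I + (\al-1)E\bigr)\bigl(I + (\al-1)F\bigr) = I + (\al-1)E + (\al-1)F + (\al-1)^2 EF.
\]
The only place the hypotheses enter is the final term: since $E,F$ are orthogonal, $EF = 0$, so the cross term vanishes and the right-hand side collapses to $I + (\al-1)(E+F) = I - (E+F) + \al(E+F)$. (One does not even need $E^2 = E$ or $F^2 = F$ for this particular identity, only $EF = 0$; the idempotency is what makes the two sides genuine basic matrices.)

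I would then add one remark for context, since it is really the point of the lemma: by Lemma \ref{rank}, $E+F$ is again a symmetric idempotent, so $I - (E+F) + \al(E+F)$ is itself a basic matrix with idempotent $E+F$ and eigenvalue $\al$. Thus the product of two basic matrices sharing the \emph{same} eigenvalue $\al$ and built from orthogonal idempotents is again basic with that eigenvalue, which is exactly what is needed to pass between a basic matrix and its decomposition into minimal basics. There is no real obstacle here; the only thing to be careful about is invoking orthogonality $EF = FE = 0$ rather than merely $E^2 = E$, $F^2 = F$, when justifying that the cross term disappears.
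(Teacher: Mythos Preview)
Your proof is correct. The paper actually states Lemma~\ref{join} without proof, treating the identity as an immediate computation; both of your approaches (specializing Lemma~\ref{mut1} with $k=2$ and $\al_1=\al_2=\al$, or the direct expansion $I+(\al-1)E$ times $I+(\al-1)F$ with $EF=0$) are valid ways to supply the omitted details, and your contextual remark about $E+F$ again being a symmetric idempotent is apt.
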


Lemma \ref{join} enables basic matrices with orthogonal idempotents and the same eigenvalue to be collected together. 

The following is a consequence of Lemma \ref{join} and Proposition \ref{help}. 

\begin{proposition}\label{not} Let $A$ be an $n\ti n$ normal matrix. Then  $A= (I-E_1+\al_1E_1)(I-E_2+\al_2E_2) \ldots   (I-E_k+\al_kE_k) $, the $\al_i$ are distinct and $\{E_1, E_2, \ldots, E_k\}$ is a  complete orthogonal set of idempotents. Moreover the eigenvalues of $A$ are $\{\al_i | i= 1,2,\ldots, k\}$ and $\al_i$ occurs with multiplicity equal to $\rank E_i$.
\end{proposition}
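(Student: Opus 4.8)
The plan is to assemble the statement from the rank‑one spectral decomposition of Proposition \ref{gen1} by grouping the summands according to their eigenvalue and then collapsing the corresponding basic matrices with Lemma \ref{join}. First I would apply Proposition \ref{gen1} to write $A=\al_1v_1v_1^*+\al_2v_2v_2^*+\cdots+\al_nv_nv_n^*$ for an orthonormal set $\{v_1,\dots,v_n\}$, and set $F_i=v_iv_i^*$. As observed after Proposition \ref{gen1}, $\{F_1,\dots,F_n\}$ is a complete orthogonal symmetric set of idempotents, so Proposition \ref{help} gives $A=\prod_{i=1}^n(I-F_i+\al_iF_i)$.

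Next I reduce to distinct eigenvalues. Let $\beta_1,\dots,\beta_k$ be the distinct values occurring among $\al_1,\dots,\al_n$, and for each $j$ put $E_j=\sum_{i:\,\al_i=\beta_j}F_i$. By Lemma \ref{trrank} each $E_j$ is a symmetric idempotent; the $E_j$ are mutually orthogonal because they are sums of distinct members of the mutually orthogonal family $\{F_i\}$; and $\sum_{j=1}^kE_j=\sum_{i=1}^nF_i=I$, so $\{E_1,\dots,E_k\}$ is a complete orthogonal set of idempotents. Since the basic matrices $(I-F_i+\al_iF_i)$ attached to mutually orthogonal idempotents commute — the product in any order equals the order‑independent expression of Lemma \ref{mut1} — I may reorder $\prod_{i=1}^n(I-F_i+\al_iF_i)$ so that factors sharing an eigenvalue are adjacent and then apply Lemma \ref{join} repeatedly to merge them, obtaining $A=\prod_{j=1}^k(I-E_j+\beta_jE_j)$ with the $\beta_j$ distinct. (Equivalently, one can quote Lemma \ref{mut1} directly: its right‑hand side is $\sum_j\beta_jE_j+(I-\sum_jE_j)=\sum_j\beta_jE_j=A$ because $\sum_jE_j=I$.) Rename $\beta_j$ as $\al_j$.

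It then remains to pin down the eigenvalues and their multiplicities. From $E_iE_j=\delta_{ij}E_j$ one computes $AE_j=\bigl(\sum_i\al_iE_i\bigr)E_j=\al_jE_j$, so $\al_j$ is an eigenvalue of $A$ and the range of $E_j$ is contained in the $\al_j$‑eigenspace; that range has dimension $\rank E_j=\tr E_j$. Because $\{v_1,\dots,v_n\}$ is a basis of $\C_n$ and $Av_i=\al_iv_i$, the space $\C_n$ is the direct sum of the subspaces $\operatorname{span}\{v_i:\al_i=\al_j\}$, the $j$‑th of which is precisely the range of $E_j$ and has dimension equal to the number of indices $i$ with $\al_i=\al_j$, i.e.\ to $\rank E_j$. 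Hence $A$ has no eigenvalues besides $\al_1,\dots,\al_k$, and each $\al_j$ occurs with multiplicity exactly $\rank E_j$; this is consistent with $\sum_j\rank E_j=\rank\bigl(\sum_jE_j\bigr)=\rank I=n$ by Lemma \ref{trrank}.

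I do not expect a genuine obstacle: the result is essentially a bookkeeping combination of Propositions \ref{gen1} and \ref{help} with Lemmas \ref{trrank}, \ref{mut1} and \ref{join}. The only point requiring care is the last step — verifying that the multiplicity of $\al_j$ is exactly, not merely at least, $\rank E_j$ — which is why the basis/direct‑sum argument is carried out explicitly rather than simply exhibiting eigenvectors. (Uniqueness of the product, mentioned in the summary, is not part of this statement and so is not addressed here.)
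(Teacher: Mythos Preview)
Your proposal is correct and follows essentially the same approach as the paper, which simply states that the result is a consequence of Lemma \ref{join} and Proposition \ref{help}; you have supplied the details of grouping the rank-one idempotents by eigenvalue and then collapsing via Lemma \ref{join}, together with a clean verification of the eigenvalue multiplicities. The paper does not spell out the multiplicity step, so your explicit direct-sum argument is a useful addition rather than a deviation.
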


It is shown below, Proposition \ref{unique}, that the expression for $A$ in Proposition \ref{not} is {\em unique} apart from the order of the commuting basic matrices. Note that it is required for this uniqueness that the $\al_i$ are distinct; the equal $\al_j$ have been  gathered into one basic matrix by Lemma \ref{join}.

Each idempotent $E$ is a sum of rank $1$ idempotents but not in a unique manner. Algorithm \ref{pro} and Theorem  \ref{nut} below give a method of expressing an idempotent $E$ as the sum of rank $1$ (pure) idempotents which have increasing initial zeros thus giving  a {\em unique} expression for $E$ as a sum of such  pure idempotents.   

One or more of  the $\al_j$ in Proposition \ref{not} may be $0$. 
Some of the $\al_j$ may be also $1$ and then $(I-E_j+E_j)=I$ giving that $A= (I-E_1+\al_1E_1)(I-E_2+\al_2E_2) \ldots   (I-E_k+\al_kE_k) $ where the $\al_i$ are distinct and $\neq 1$.  Here then the eigenvalue $1$ is in `disguise' and if so, it  occurs with multiplicity equal to $(n- \sum_{j=1}^k\rank E_j)$.

Now $A$ {\em acts} on its idempotents by $AE_j=\al_jE_j$, and  $AE=E$ when $E=(I - \sum_{j=1}^{k}E_j) \neq 0$.

In particular Proposition \ref{not} can be applied  to unitary matrices. The eigenvalues of a unitary matrix are of the form $e^{i\theta}$.
\begin{proposition}\label{notunit} Let $U$ be an $n\ti n$ unitary matrix.  Then  $U= (I-E_1+e^{i\theta_1}E_1)(I-E_2+e^{i\theta_2}E_2) \ldots   (I-E_k+e^{i\theta_k}E_k) $, the $\al_i$ are distinct and $\{E_1, E_2, \ldots, E_k\}$ is an orthogonal set of idempotents. Moreover the eigenvalues of $U$ are $\{e^{i\theta_i}, i= 1,2,\ldots, k\}$ and $e^{i\theta_i}$ occurs with multiplicity $\rank E_i$. Now  eigenvalue $1$ occurs with multiplicity $n-(\sum_{j=1}^k\rank E_j)$ which may be $0$. Moreover a matrix of the form $(I-E_1+e^{i\theta_1}E_1)(I-E_2+e^{i\theta_2}E_2) \ldots   (I-E_k+e^{i\theta_k}E_k) $ is a unitary matrix for any orthogonal set of symmetric idempotents $\{E_1, E_2, \ldots, E_k\}$. 
  \end{proposition}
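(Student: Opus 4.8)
The plan is to obtain both implications from Proposition \ref{not} together with the elementary spectral facts about unitary matrices. For the forward direction I would first note that a unitary matrix $U$ is normal, since $UU^*=I=U^*U$, so Proposition \ref{not} applies and produces $U=(I-E_1+\al_1E_1)\cdots(I-E_k+\al_kE_k)$ with the $\al_j$ distinct, $\{E_1,\dots,E_k\}$ an orthogonal set of symmetric idempotents, and each $\al_j$ an eigenvalue of $U$ of multiplicity $\rank E_j$. The only thing left is to identify the $\al_j$: every eigenvalue of a unitary matrix has modulus $1$, because $Uv=\al v$ with $v\neq 0$ forces $v^*v=v^*U^*Uv=|\al|^2 v^*v$. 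Hence each $\al_j=e^{i\theta_j}$, and, following the convention fixed just before the statement, any factor with $\al_j=1$ equals $I$ and is dropped, so we may assume $e^{i\theta_j}\neq 1$.

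Next I would handle the eigenvalue $1$. Set $E_0=I-\sum_{j=1}^k E_j$; by Lemma \ref{trrank} this is a symmetric idempotent orthogonal to every $E_j$ with $\rank E_0=n-\sum_{j=1}^k\rank E_j$. Expanding the product via Lemma \ref{mut1} gives $U=\sum_{j=1}^k e^{i\theta_j}E_j+E_0$, whence $UE_0=E_0$ (using $E_jE_0=0$); so if $E_0\neq 0$ then $1$ is an eigenvalue of $U$ with multiplicity $\rank E_0=n-\sum_{j=1}^k\rank E_j$, and if $E_0=0$ this number is $0$. Together with the eigenvalues supplied by the $E_j$ this exhausts all $n$ eigenvalues.

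For the converse, let $\{E_1,\dots,E_k\}$ be any orthogonal set of symmetric idempotents and put $M=\prod_{j=1}^k(I-E_j+e^{i\theta_j}E_j)$. By Lemma \ref{mut1}, $M=\sum_{j=1}^k e^{i\theta_j}E_j+E_0$ with $E_0=I-\sum_{j=1}^k E_j$; setting $\al_0:=1$, this reads $M=\sum_{j=0}^k\al_jE_j$ over the complete orthogonal set of symmetric idempotents $\{E_0,E_1,\dots,E_k\}$. Using $E_j^*=E_j$, $E_iE_j=0$ for $i\neq j$, $E_j^2=E_j$, and $\sum_{j=0}^k E_j=I$, a one-line computation gives $MM^*=\sum_{j=0}^k|\al_j|^2E_j=\sum_{j=0}^k E_j=I$, and likewise $M^*M=I$, so $M$ is unitary. (Alternatively, by Lemma \ref{join} each factor $(I-E_j+e^{i\theta_j}E_j)$ has conjugate transpose $(I-E_j+e^{-i\theta_j}E_j)$, their product is $I-E_j+E_j=I$, so each factor is unitary and hence so is the product.)

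All computations involved are routine; the only delicate point is the bookkeeping around the eigenvalue $1$ — in particular, making sure that under the stated convention the factors with $\al_j=1$ have genuinely been absorbed into $I$, so that $n-\sum_{j=1}^k\rank E_j$ is the exact multiplicity of $1$ as an eigenvalue of $U$ and not just a lower bound.
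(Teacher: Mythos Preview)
Your argument is correct and follows exactly the line the paper indicates: it simply specialises Proposition \ref{not} to the unitary case using that eigenvalues of a unitary matrix have modulus $1$, and the paper gives no further proof. Your treatment of the converse (either via the expansion from Lemma \ref{mut1} or by observing each factor is unitary) and of the bookkeeping for the eigenvalue $1$ is more detailed than anything in the paper, but entirely in the same spirit.
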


A basic unitary matrix is one of the form $(I-E+e^{i\theta}E)$. 

 In Proposition \ref{not}, $A$ may be singular in which case the eigenvalue $0$ appears to a certain multiplicity. 
In this case
$A= (I-E_1+\al_1E_1)(I-E_2+\al_2E_2) \ldots   (I-E_k+\al_kE_k)(I-E) $, where the $\al_i$ are distinct and  $\neq 0$ and $\{E_1, E_2, \ldots, E_k, E\}$ is an orthogonal set of idempotents. (The eigenvalue $1$ is {\em hidden} when  $\{E_1, E_2, \ldots, E_k, E\}$ is not complete.)
It is easy to check that $A^+=\prod_{j=1}^k(I-E_j+\al_j^{-1}E_j)(I-E)$ is the {\em pseudo inverse} of $A$. 
\begin{proposition} Let $A= (I-E_1+\al_1E_1)(I-E_2+\al_2E_2) \ldots   (I-E_k+\al_kE_k)(I-E) $, where the $\al_i$ are distinct and  $\neq 0$ and $\{E_1, E_2, \ldots, E_k, E\}$ is an orthogonal set of idempotents. Then $A^+=\prod_{j=1}^k(I-E_j+\al_j^{-1}E_j)(I-E)$ is the {\em pseudo inverse} of $A$.
  \end{proposition}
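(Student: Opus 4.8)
The plan is to put both $A$ and the candidate $A^+$ into additive (idempotent-sum) form and then check the four Moore--Penrose defining identities by direct multiplication, exploiting that all the idempotents involved are mutually orthogonal and symmetric.

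First I would set $F = I - (E_1 + \cdots + E_k + E)$. By Lemma \ref{trrank}, $F$ is a symmetric idempotent orthogonal to each $E_j$ and to $E$, so $\{E_1, \dots, E_k, E, F\}$ is a complete orthogonal set of symmetric idempotents (and $F$ may of course be $0$). Writing $I - E = I - E + 0\cdot E$ as a basic matrix with eigenvalue $0$ and applying Lemma \ref{mut1} to the list $(I - E_1 + \alpha_1 E_1), \dots, (I - E_k + \alpha_k E_k), (I - E)$ gives $A = \alpha_1 E_1 + \cdots + \alpha_k E_k + F$, and the same computation with each $\alpha_j$ replaced by $\alpha_j^{-1}$ (legitimate since every $\alpha_j \neq 0$) gives $A^+ = \alpha_1^{-1} E_1 + \cdots + \alpha_k^{-1} E_k + F$.

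Next I would multiply these out. Using $E_i E_j = 0$ for $i \neq j$, $E_j^2 = E_j$, $E_j F = F E_j = 0$ and $F^2 = F$, all the mixed terms vanish and $\alpha_j \alpha_j^{-1} E_j = E_j$, so $A A^+ = \sum_{j=1}^k E_j + F = I - E$, and likewise $A^+ A = I - E$. Thus $AA^+ = A^+A = I - E$ is symmetric (because $E^* = E$) and idempotent, which immediately yields the two Hermitian conditions $(AA^+)^* = AA^+$ and $(A^+A)^* = A^+A$. For the remaining two, note $E A = E(\sum_j \alpha_j E_j + F) = 0$ and similarly $E A^+ = 0$; hence $AA^+A = (I-E)A = A - EA = A$ and $A^+ A A^+ = (I-E)A^+ = A^+ - EA^+ = A^+$. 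All four Moore--Penrose identities hold, so $A^+$ is the pseudo inverse of $A$.

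There is no real obstacle here: once Lemma \ref{mut1} is in hand the proof is a short orthogonality computation, and the only point needing a word of care is that ``orthogonal set of idempotents'' is understood throughout to mean a set of mutually orthogonal \emph{symmetric} idempotents, as fixed in Section \ref{back} --- which is precisely what makes the two Hermitian conditions automatic. If one prefers, the same conclusion can be read off from the fact that $A$ is normal with the $E_j$ as spectral projections, but the direct verification above is cleaner and self-contained.
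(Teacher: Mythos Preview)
Your argument is correct: converting both $A$ and the candidate $A^{+}$ to their additive form via Lemma~\ref{mut1}, computing $AA^{+}=A^{+}A=I-E$, and then reading off the four Moore--Penrose identities is exactly the verification the paper has in mind. The paper itself gives no proof beyond the remark ``It is easy to check,'' so your write-up simply fills in the omitted details along the intended lines.
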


\begin{proposition}\label{huy} Let $A$ be a normal matrix. Then $A^2=I$ if and only $A=(I- 2E)$ for a symmetric idempotent $E$. 
\end{proposition}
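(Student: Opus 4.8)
The plan is to prove the two implications separately, using the spectral form of a normal matrix recorded in Proposition~\ref{gen1} (equivalently Proposition~\ref{not}); the reverse implication is a one-line computation and carries no real content, while the forward implication is where the structural input is used.

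For the reverse implication, suppose $A=I-2E$ with $E$ a symmetric idempotent. Then $A^*=I-2E^*=I-2E=A$, so $A$ is Hermitian and in particular normal, which incidentally shows the normality hypothesis is only needed in the other direction. A direct computation gives $A^2=(I-2E)^2=I-4E+4E^2=I-4E+4E=I$, using $E^2=E$. Note also that $I-2E$ is precisely the basic matrix $(I-E+\al E)$ with $\al=-1$, so this direction just says that basic matrices with eigenvalue $-1$ square to the identity.

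For the forward implication, assume $A$ is normal with $A^2=I$. By Proposition~\ref{gen1} write $A=\sum_{i=1}^n\al_i v_iv_i^*$ for an orthonormal set $\{v_1,\dots,v_n\}$, so that the $E_i:=v_iv_i^*$ form a complete orthogonal symmetric set of idempotents. Applying $A^2$ to $v_j$ and using $v_i^*v_j=\delta_{ij}$ kills all cross terms, giving $A^2v_j=\al_j^2v_j$; since $A^2=I$ and $v_j\neq 0$ this forces $\al_j^2=1$, hence $\al_j\in\{1,-1\}$ for every $j$. Now put $E:=\sum_{i:\,\al_i=-1}v_iv_i^*$, which is a symmetric idempotent by Lemma~\ref{trrank} (it is a sum of mutually orthogonal symmetric idempotents). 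Then
$$I-2E=\sum_{\al_i=1}v_iv_i^*+\sum_{\al_i=-1}(1-2)v_iv_i^*=\sum_{i=1}^n\al_i v_iv_i^*=A,$$
as required. (Equivalently, via Proposition~\ref{not} and Lemma~\ref{mut1}: $A^2=\sum_j\al_j^2E_j+(I-\sum_jE_j)=I$ forces $\al_j^2=1$; distinctness of the $\al_j$ then leaves at most the two basic factors with $\al=1$ and $\al=-1$, the first being $I$ and the second being $I-2E_j$, which by Lemma~\ref{join} is the single basic matrix $I-2E$ with $E$ the sum of the relevant idempotents.)

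I do not anticipate a genuine obstacle here; the only point needing a little care is the bookkeeping around the possibly "hidden" eigenvalue $1$ and the completeness of the idempotent set, and that is exactly what the completeness clause of Proposition~\ref{not} together with Lemma~\ref{trrank} handle.
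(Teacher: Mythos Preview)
Your proof is correct and follows essentially the same approach as the paper: both directions are handled the same way, with the forward implication using the spectral/basic-matrix decomposition (Propositions~\ref{gen1}/\ref{not}) to force the eigenvalues into $\{1,-1\}$ and then collecting the $-1$-eigenspace into a single symmetric idempotent $E$. Your write-up is more detailed than the paper's (which simply invokes Proposition~\ref{not} and writes $A=(I-E-E)(I-F+F)=I-2E$), but the content is the same.
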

\begin{proof} Suppose $A^2=1$. Then the eigenvalues of $A$ are $\{-1,1\}$. Thus by Proposition \ref{not},  $A= (I-E-E)(I- F+F) = (I-2E)$ for symmetric orthogonal idempotents $E,F$. If $A= (I-2E)$ then clearly $A^2=I$ and $A$ can be diagonalised by a unitary matrix. 
\end{proof}

The following is a corollary:
\begin{proposition}\label{cor} $U$ is a symmetric ($U^*=U$) unitary matrix if and only if $U=I-2E$ for an idempotent $E$.
\end{proposition}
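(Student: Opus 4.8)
The plan is to read this off as a corollary of Proposition \ref{huy}. For the forward direction I would start from a symmetric unitary $U$, so that $U^*=U$ and $UU^*=I$; these two identities together immediately give $U^2 = UU^* = I$, and $U$ is normal because every unitary matrix is normal. Proposition \ref{huy} then applies verbatim and delivers $U = I-2E$ with $E$ a symmetric idempotent.

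For the converse I would take $U = I - 2E$ with $E$ a symmetric idempotent and verify the two defining properties of a symmetric unitary matrix separately. Symmetry is immediate: $U^* = I - 2E^* = I - 2E = U$, using $E^*=E$. Unitarity follows because $U^2 = I - 4E + 4E^2 = I$ (using $E^2=E$), so that $UU^* = U^2 = I$; alternatively one can simply quote the easy direction of Proposition \ref{huy} here, since $U=I-2E$ is diagonalisable by a unitary matrix and hence normal.

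I do not anticipate a genuine obstacle: the whole content is the observation that for a symmetric matrix the conditions ``unitary'' and ``$U^2=I$'' coincide, after which Proposition \ref{huy} does all the work. The only point that deserves a sentence is the role of the symmetry hypothesis on $E$. It is exactly what makes $U = I - 2E$ symmetric, and it is automatically satisfied by the idempotent produced in Proposition \ref{huy}, so the two sides of the stated equivalence match up precisely; if one merely assumed $E$ idempotent rather than symmetric, $I-2E$ would still square to $I$ but need not be unitary, which is why the hypothesis cannot be dropped.
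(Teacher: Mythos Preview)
Your argument is correct and is exactly the approach the paper takes: Proposition~\ref{cor} is stated there as an immediate corollary of Proposition~\ref{huy}, with no separate proof given, and your write-up simply spells out the one-line reductions in each direction. Your closing remark about needing $E$ to be symmetric is also apt, since the paper's statement tacitly assumes this (as elsewhere in the paper ``idempotent'' means symmetric idempotent).
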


Notice in  Proposition \ref{huy}  and in Proposition \ref{cor}, the eigenvalue $\{-1\}$ occurs with multiplicity equal to $\rank E$ and the eigenvalue  $\{1\}$ occurs with multiplicity equal to $\rank F= n-\rank E$.
\begin{corollary} Let $HH = nI $ and $H=H^*$ for an $n\ti n$ matrix $H$. Then $H=\sqrt{n}(I-2E)$ for a symmetric idempotent $E$. In particular this applies to a symmetric Hadamard matrix.
\end{corollary}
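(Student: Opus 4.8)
The plan is to reduce the statement to Proposition \ref{huy} by rescaling. First I would set $A = \frac{1}{\sqrt{n}}H$. Since $\sqrt{n}$ is a positive real scalar and $H = H^*$, it follows that $A^* = \frac{1}{\sqrt{n}}H^* = \frac{1}{\sqrt{n}}H = A$, so $A$ is Hermitian and in particular normal. Moreover $A^2 = \frac{1}{n}HH = \frac{1}{n}\,nI = I$, using the hypothesis $HH = nI$.

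Next I would invoke Proposition \ref{huy}: as $A$ is a normal matrix with $A^2 = I$, there is a symmetric idempotent $E$ with $A = I - 2E$. Multiplying through by $\sqrt{n}$ gives $H = \sqrt{n}\,A = \sqrt{n}(I - 2E)$, which is the asserted form, and (from the remark following Proposition \ref{cor}) the eigenvalue $-\sqrt{n}$ of $H$ occurs with multiplicity $\rank E$ while $\sqrt{n}$ occurs with multiplicity $n-\rank E$.

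Finally, for the application to a symmetric Hadamard matrix: a Hadamard matrix $H$ of order $n$ has real entries $\pm 1$ and satisfies $HH^T = nI$. If in addition $H$ is symmetric, then, the entries being real, $H^* = H^T = H$, so $HH = HH^T = nI$ and both hypotheses of the corollary hold; hence $H = \sqrt{n}(I - 2E)$ for a symmetric idempotent $E$.

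I do not anticipate a genuine obstacle here; the proof is essentially a one-line application of Proposition \ref{huy}. The only points needing a moment's care are that dividing a Hermitian matrix by a positive real keeps it Hermitian, and therefore normal, so that Proposition \ref{huy} is applicable, and that in the Hadamard case one must note the entries are real so that transpose and conjugate transpose coincide, turning the defining identity $HH^T = nI$ into $HH = nI$.
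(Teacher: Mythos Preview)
Your proof is correct and matches the paper's intended argument: the corollary is stated without proof immediately after Proposition~\ref{cor}, and the implied reasoning is exactly the rescaling you give (set $A=H/\sqrt{n}$, observe $A^*=A$ and $A^2=I$, and apply the preceding result). Whether one cites Proposition~\ref{huy} or its consequence Proposition~\ref{cor} is immaterial here, since for a Hermitian matrix $A$ the conditions $A^2=I$ and $A$ unitary coincide.
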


\subsection{Examples}\label{example}
\begin{enumerate}
\item\label{oneone} $U= \begin{ssmatrix} 0 & 1 \\ 1 & 0 \end{ssmatrix}$. This has eigenvalues $\{1,-1\}$ and corresponding eigenvectors $(1,1)\T, (1,-1)\T$. Then 
$U= E_1 -E_2 = E_1 +e^{i\pi}E_2 = (I-E_1+E_1)(I- E_2 + e^{i\pi}E_2) = I - E_2 +e^{i\pi}E_2$ where $E_1 = \frac{1}{2}\begin{ssmatrix} 1 & 1 \\ 1 & 1 \end{ssmatrix},
E_2 = \frac{1}{2}\begin{ssmatrix} 1 & -1 \\ -1 & 1 \end{ssmatrix}$. 
Using this formula,  roots of $U$ may be obtained directly -- see Section \ref{root}. 

\item\label{two} A  commonly referenced matrix is the following real orthogonal/unitary matrix
$U=\begin{ssmatrix} \cos \theta & \sin \theta \\ -\sin \theta & \cos
    \theta \end{ssmatrix}$.
This has eigenvalues $\{e^{i\theta}, e^{-i\theta}\}$. 
Then 
$U=e^{i\theta}E_1+ e^{-i\theta}E_2 =
\frac{1}{2}e^{i\theta}\begin{ssmatrix}1 & -i \\ i & 1 
		      \end{ssmatrix} + \frac{1}{2}e^{-i\theta}
\begin{ssmatrix}1 & i \\ -i & 1
\end{ssmatrix} $, 
which may be checked independently. The $E_i$ are symmetric
idempotents. 
It is seen that 
$U=(I-E_1+e^{i\theta}E_1)(I-E_2+e^{-i\theta}E_2)$ 
which gives $U$ as a product of {\em basic} unitary matrices, which are defined in Section \ref{build} below. 
\end{enumerate}

\subsection{Uniqueness}\label{build}

 \begin{definition} A {\em basic matrix} is one  of the form $B= (I-E+ \al E)$ for a symmetric idempotent matrix $E$.
\end{definition}
The idempotent $E$ of $B$ is said to be the idempotent {\em involved in  $B$} and $\al $ is the eigenvalue {\em involved in $B$}.

\begin{definition} Say a basic matrix $I-E+\al E$ is a {\em simple} basic unitary matrix if the idempotent $E$ has $\rank 1$.
  \end{definition}
\begin{proposition}\label{one} A symmetric  idempotent $E$ has $\rank 1$ if and only if $E=uu^*$  
  for a unit column vector $u$.
\end{proposition}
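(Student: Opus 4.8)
The plan is to prove both directions of this characterization of rank $1$ symmetric idempotents directly.

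First I would handle the easy implication. Suppose $E = uu^*$ for a unit column vector $u$, so $u^*u = 1$. Then $E^* = (uu^*)^* = uu^* = E$, so $E$ is symmetric, and $E^2 = uu^*uu^* = u(u^*u)u^* = uu^* = E$, so $E$ is idempotent. That $\rank E = 1$ is already recorded in the ``Additional notation'' subsection: a matrix of the form $vv^*$ with $v \neq 0$ is necessarily of rank $1$. So this direction is immediate.

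For the converse, suppose $E$ is a symmetric idempotent of rank $1$. The approach is to use the spectral decomposition of $E$ as a normal (indeed Hermitian) matrix. By Proposition \ref{gen1}, since $E$ is normal, we may write $E = \al_1 v_1 v_1^* + \al_2 v_2 v_2^* + \ldots + \al_n v_n v_n^*$ for an orthonormal basis $\{v_1, \ldots, v_n\}$ of $\C_n$, where the $\al_i$ are the eigenvalues of $E$. Because $E^2 = E$, each eigenvalue satisfies $\al_i^2 = \al_i$, so $\al_i \in \{0,1\}$. The number of indices with $\al_i = 1$ equals $\rank E = 1$ (the rank of a diagonalizable matrix is the number of nonzero eigenvalues, counted with multiplicity; alternatively use $\rank E = \tr E = \sum \al_i$ as invoked in Lemma \ref{rank}). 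Hence exactly one $\al_i$, say $\al_1$, equals $1$ and the rest are $0$, giving $E = v_1 v_1^*$ with $v_1$ a unit vector. Setting $u = v_1$ finishes the proof.

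I do not expect any genuine obstacle here; the statement is essentially a packaging of the spectral theorem for Hermitian matrices restricted to an idempotent, and everything needed has been set up earlier in the excerpt. The only point requiring a little care is justifying that the multiplicity of the eigenvalue $1$ is exactly the rank, which follows cleanly from the trace-equals-rank fact for idempotents already cited (see \cite{idemrank} and Lemma \ref{rank}). One could alternatively give an elementary argument avoiding the spectral theorem: a rank $1$ matrix can be written $E = uw^*$ for column vectors $u, w$; symmetry forces $uw^* = wu^*$, and idempotency together with a suitable normalization forces $w$ to be a scalar multiple of $u$ with $u^*u = 1$ after rescaling. But the spectral route via Proposition \ref{gen1} is shorter and fits the paper's development, so that is the one I would present.
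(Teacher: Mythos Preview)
Your proof is correct. For the forward direction you match the paper. For the converse, the paper does not actually give an argument: it simply cites \cite{tedbarry}, Proposition 4.5. Your route via Proposition \ref{gen1} is a genuine alternative and has the advantage of being self-contained within this paper's own setup; it also dovetails with the later Proposition \ref{idm}, which uses essentially the same spectral reasoning for higher-rank idempotents. The citation approach in the paper buys brevity and avoids repeating material from the companion paper, but your argument makes the logical dependence on the spectral decomposition explicit and requires nothing external.
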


\begin{proof} If $E$ has the form $uu^*$ for a unit column vector $u$,  then clearly $E$ is a symmetric idempotent of $\rank 1$.
  On the other hand if $E$ is a symmetric idempotent of $\rank 1$ then
 $E$ is of the form $uu^*$ for a unit column vector $u$. This is shown
 in for example 
 \cite{tedbarry} Proposition 4.5.
  \end{proof}

  A minimal basic is of the form
 $(I-E+\al E)$ for $E=uu^*$ where $u$ is a unit column vector and
 has rank $1$ and $\al\neq 0$. 

\paragraph{Note} Let $I-E+\al E$ and $I-E+\be E$ be basic
 matrices with the same idempotent. Then 
$(I-E+\al E)(I-E+\be E)= I-E + \al \be E$. A product of a minimal basic with another minimal basic
with the same idempotent is another minimal basic.  
The minimality is expressed in terms of the {\em rank} of the idempotent involved.

\begin{proposition}\label{cols} A symmetric idempotent $E$ has $\rank k$ if and only if $E=u_1u_1^* + u_2u_2^* + \ldots + u_ku_k^*$ for unit column mutually orthogonal vectors $u_i$.
\end{proposition}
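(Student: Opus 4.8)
The plan is to establish the two implications separately: the reverse implication is a direct rank count using results already in hand, while the forward implication is the spectral decomposition of a Hermitian projection.

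For the reverse implication I would assume $E = u_1u_1^* + u_2u_2^* + \ldots + u_ku_k^*$ with the $u_i$ mutually orthogonal unit vectors. By Proposition \ref{one} each $u_iu_i^*$ is a symmetric idempotent of $\rank 1$, and since $u_i^*u_j = 0$ for $i\neq j$ the matrices $u_iu_i^*$ and $u_ju_j^*$ are orthogonal; thus $\{u_1u_1^*,\ldots,u_ku_k^*\}$ is a set of mutually orthogonal symmetric idempotents. Lemma \ref{trrank} then gives at once that $E$ is a symmetric idempotent and that $\rank E = \sum_{i=1}^k \rank(u_iu_i^*) = k$.

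For the forward implication I would start from the observation that a symmetric idempotent is normal, since $E^* = E$ forces $EE^* = E^2 = E = E^*E$. Proposition \ref{gen1} then yields an orthonormal basis $\{v_1,\ldots,v_n\}$ of $\C_n$ and scalars $\al_i$ with $E = \al_1 v_1v_1^* + \ldots + \al_n v_nv_n^*$. Squaring this expression and using $v_i^*v_j = \delta_{ij}$ gives $E^2 = \sum_i \al_i^2 v_iv_i^*$, so comparing with $E^2 = E$ forces $\al_i^2 = \al_i$, i.e. $\al_i \in \{0,1\}$ for every $i$. The number of indices $i$ with $\al_i = 1$ is $\sum_i \al_i = \tr E$, which equals $\rank E = k$ by the trace-equals-rank property of idempotents. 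After reindexing so that $\al_1 = \ldots = \al_k = 1$ and the remaining $\al_i = 0$, we obtain $E = v_1v_1^* + \ldots + v_kv_k^*$, and setting $u_i = v_i$ completes the argument.

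This argument is essentially routine, so I do not anticipate a genuine obstacle; the only points to watch are the bookkeeping of multiplicities (that exactly $k = \rank E = \tr E$ of the eigenvalues equal $1$) and the fact that the $\al_i$ are forced to take the values $0$ or $1$, not merely that $E$ is diagonalisable. An alternative is induction on $k$ with base case Proposition \ref{one}: at the inductive step one picks a unit vector $u_k$ in the range of $E$, checks that $F = E - u_ku_k^*$ is a symmetric idempotent orthogonal to $u_ku_k^*$ (the one computation needing care, which reduces to $Eu_k = u_k$), and applies Lemma \ref{rank} to see $\rank F = k-1$ before invoking the induction hypothesis. I would favour the spectral version above since it sidesteps this verification.
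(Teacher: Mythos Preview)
Your proof is correct, but your primary route differs from the paper's. You invoke Proposition \ref{gen1} (the spectral decomposition of a normal matrix) and then read off that each eigenvalue of an idempotent is $0$ or $1$, counting the ones by $\tr E = \rank E$. The paper instead argues by induction on $k$: it takes the first nonzero column $w$ of $E$, normalises to $u = w/|w|$, sets $F = uu^*$, and checks that $E-F$ is a symmetric idempotent of rank $k-1$ whose first column (and row) vanishes, then recurses on the smaller matrix.

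What each buys: your spectral argument is cleaner and avoids the verification that $E-F$ is idempotent with vanishing first column (which rests on $|w|^2 = E_{11} = w_1$, a fact the paper leaves implicit). The paper's column-peeling argument, however, is constructive in a specific way: it always strips off the \emph{first} nonzero column, so the resulting $u_i$ have strictly increasing numbers of leading zeros. This is exactly what drives Algorithm \ref{pro} and the uniqueness statement of Theorem \ref{nut} later in the paper, whereas your spectral proof gives no canonical choice of the $u_i$. Your alternative inductive sketch is closer in spirit to the paper but still picks an arbitrary vector in the range rather than the normalised first column.
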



\begin{proof} 
  When $k=1$ the result follows from
 Proposition \ref{one}. If the first column of $E$ is zero
 then the first row of $E$ is zero and the result follows by induction. 
Suppose then the first column $w$ of $E$ is non-zero and define $u=w/|w|$ which is a unit column vector.  Then $F= uu^*$ is an idempotent of rank $1$. Let $E_1=
  E- F$. Then $E_1= E-F$ is an idempotent symmetric matrix orthogonal to
 $E$ which has first column, and hence first row, consisting of zeros. Let
 $A$ be the $(n-1)\ti (n-1)$ matrix with first column and first row of
 $E_1$ omitted.  Since $F,E_1$ are orthogonal it follows that $\rank F +
 \rank E_1 = \rank (F+E_1) = \rank E$ and so $E-F$ has $\rank
 (k-1)$. Thus $A$ is rank $(k-1)$ symmetric idempotent matrix. The result then follows by induction.
  
\end{proof} 

Thus a basic unitary matrix is of the form $(I-E+\al E)$ where
$E=u_1u_1^* + u_1u_2^* + \ldots + u_ku_k^*$ for unit column mutually
orthogonal vectors $u_i$. When $k=1$ the basic unitary matrix is what is
termed a simple basic unitary matrix and its idempotent is of the form
$uu^*$. 

\begin{proposition}\label{unit} Suppose the space $W$ is generated by the unit
 orthogonal columns vectors $\{w_1,w_2,\ldots, w_k\}$ and by the unit orthogonal
 column vectors $\{v_1,v_2,\ldots, v_k\}$. Then 

$w_1w_1^*+w_2w_2^* + \ldots + w_kw_k^* = v_1v_1^* + \ldots + v_kv_k^*$.

\end{proposition}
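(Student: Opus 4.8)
The statement asserts that the orthogonal projection onto a subspace $W$ depends only on $W$, not on the choice of orthonormal basis used to build it. The plan is to identify $P := w_1w_1^* + \cdots + w_kw_k^*$ as the orthogonal projector onto $W$ in a basis-free way, and then conclude that the analogous sum over the $v_i$ equals the same projector. First I would show that $P$ fixes every $w_j$: since the $w_i$ are orthonormal, $w_i^*w_j = \delta_{ij}$, so $Pw_j = \sum_i w_i(w_i^*w_j) = w_j$. By linearity $P$ acts as the identity on all of $W = \langle w_1,\dots,w_k\rangle$. Next I would show $P$ annihilates $W^\perp$: if $u^*w_i = 0$ for all $i$, i.e. $w_i^*u = 0$ for all $i$, then $Pu = \sum_i w_i(w_i^*u) = 0$. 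Since $\C_n = W \oplus W^\perp$ (the $w_i$ being part of an orthonormal basis, which extends to one of $\C_n$, as in the discussion following Proposition \ref{gen1}), these two properties determine $P$ uniquely as a linear map: any vector decomposes as $x = x_W + x_{W^\perp}$ and $Px = x_W$.

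Now I would run the identical argument with $\{v_1,\dots,v_k\}$: the matrix $Q := v_1v_1^* + \cdots + v_kv_k^*$ fixes $W = \langle v_1,\dots,v_k\rangle$ pointwise and kills $W^\perp$, because the $v_i$ span the \emph{same} space $W$. Hence $Q$ is the same linear map as $P$, so $Q = P$ as matrices. This gives the claimed equality
\[
w_1w_1^*+w_2w_2^* + \cdots + w_kw_k^* = v_1v_1^* + \cdots + v_kv_k^*.
\]
Alternatively, and perhaps more cleanly for the writeup, I could argue: both $P$ and $Q$ are symmetric idempotents (by Lemma \ref{rank} / Lemma \ref{trrank}, applied to the orthogonal rank-$1$ idempotents $w_iw_i^*$, resp. $v_iv_i^*$) whose column space is exactly $W$; and a symmetric idempotent is uniquely determined by its column space, since it acts as the identity there and as zero on the orthogonal complement, which is the kernel.

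The only real subtlety — and it is minor — is making sure the rank-$k$ idempotents $P$ and $Q$ genuinely have column space equal to $W$ and kernel equal to $W^\perp$, rather than some larger or smaller subspace. This is handled by the two bullet computations above ($Pw_j=w_j$ forces $W\subseteq \operatorname{im}P$, while $\operatorname{rank}P = k = \dim W$ by Lemma \ref{trrank} forces equality; and $W^\perp\subseteq\ker P$ together with the rank–nullity count $\dim\ker P = n-k = \dim W^\perp$ forces equality there too). I do not anticipate any genuine obstacle; the whole proof is a short exercise in the correspondence between symmetric idempotents and subspaces, and I would keep it to a few lines, citing Proposition \ref{one} and Proposition \ref{cols} for the basic dictionary between orthonormal spanning sets and their associated idempotents.
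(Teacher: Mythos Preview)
Your argument is correct, but it follows a genuinely different route from the paper's. The paper proceeds computationally: it writes the change-of-basis relation $(w_1,\ldots,w_k) = (v_1,\ldots,v_k)U$ for some $k\times k$ matrix $U$, then uses orthonormality of the $w_i$ and $v_i$ to deduce $U^*U = I_k$, hence $UU^* = I_k$, and finally expands $\sum_i w_iw_i^* = (v_1,\ldots,v_k)UU^*(v_1^*,\ldots,v_k^*)\T = \sum_i v_iv_i^*$. Your approach is instead conceptual: you characterise both sums as the unique orthogonal projector onto $W$ (identity on $W$, zero on $W^\perp$) and conclude they coincide. Your version is shorter and ties in more directly with the paper's running theme that symmetric idempotents correspond to subspaces; the paper's version has the minor side benefit of making explicit that any two orthonormal bases of $W$ are related by a unitary $k\times k$ matrix, a fact which is not otherwise needed here but is concrete and self-contained. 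Either argument would serve; your rank--nullity bookkeeping in the final paragraph is a clean way to close off the only possible loose end.
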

\begin{proof} 
Now \begin{eqnarray*} w_1 = \al_{11}v_1 + \al_{12}v_2 + \ldots + \al_{1k}
 v_k \\ w_2 = \al_{21}v_1+ \al_{22}v_2 + \ldots + \al_{2k}v_k \\  \vdots
  \\ w_k=\al_{k1}v_1+ \al_{k2}v_2 + \ldots + \al_{kk}v_k \end{eqnarray*}

for some $\al_{ij} \in \C$.

In other words  \begin{eqnarray}\label{mark} (w_1, w_2, \ldots, w_k) = (v_1,v_2, \ldots,
 v_k)\begin{pmatrix}\al_{11}& \al_{21} & \ldots & \al_{k1} \\ \al_{12} &
      \al_{22} & \ldots & \al_{k2} \\ \vdots & \vdots & \vdots & \vdots
      \\ \al_{1k} & \al_{2k} & \ldots & \al_{kk}\end{pmatrix} \end{eqnarray}

Denote the matrix on the right (involving $\al_{ij}$) by $U$.

Then \begin{eqnarray}\label{to}(w_1,w_2,\ldots, w_k)\begin{pmatrix}w_1^*
						     \\ w_2^* \\ \vdots \\
						    w_k^*\end{pmatrix} =
						    (v_1,v_2,\ldots,
						    v_k)UU^*\begin{pmatrix}v_1^*
						     \\ v_2^*\\ \vdots
						      \\ v_k^*\end{pmatrix}\end{eqnarray}

Then  by (\ref{mark}),  \begin{eqnarray*} I_k&=& \begin{pmatrix}w_1^* \\ w_2^* \\ \vdots \\ w_k^* \end{pmatrix} 
 (w_1,w_2,\ldots,w_k) \\ &=& U^* \begin{pmatrix}v_1^* \\ v_2^* \\ \vdots \\ v_k^* \end{pmatrix}(v_1,v_2,\ldots, v_k) U  
 \\ &=&
 U^*U \end{eqnarray*}

Thus $U^*U = I_k$ and hence $UU^*=I_k$. Therefore by (\ref{to})

$(w_1,w_2,\ldots, w_k)\begin{pmatrix}w_1^*\\ w_2^* \\ \vdots \\
		      w_k^*\end{pmatrix} = (v_1,v_2, \ldots,
 v_k)\begin{pmatrix}v_1^* \\ v_2^* \\ \vdots \\ v_k^*\end{pmatrix} $
  
and hence $w_1w_1^*+w_2w_2^* + \ldots + w_kw_k^* = v_1v_1^* + \ldots + v_kv_k^*$.

\end{proof}

Suppose now $B=(1-E_1+\al_1E)(1-E_2+\al_2E_2)\ldots
(1-E_k+\al_kE_k)$
is a product of basic $n\ti n$ matrices. Implicit in this is that if $r=
\rank
E_1+ \rank E_2 + \ldots + \rank E_k = n - \rank(E_1+E_2 + \ldots + E_k) < n$ then $F=(I-E_1 -E_2 - \ldots
-E_k)$ is a non-zero idempotent with eigenvalue $1$ of  multiplicity $(n-r)$.




By Lemma \ref{join}  a product basic matrices using orthogonal idempotents with the same eigenvalues can be collected together
into  a  basic matrix.

 Suppose  $B=
(I-F_1+\al_1F_2)(I-F_2+\al_2F_2) \ldots (I-F_l+\al_lF_l)$ where
the $F_j$ are mutually orthogonal  idempotents and the $\al_j$ are all different. We now show
that except for the order such an expression is unique. Each $F_j$ in
the product has the form $w_1w_1^*+ w_2w_2^*+ \ldots + w_kw_k^*$ where
$k$ is the rank of $F_j$ and is also the multiplicity of the
corresponding eigenvalue $\al_j$.

\begin{proposition}\label{unique} Suppose $B=(I-E_1+\al_1E_1)(I-E_2+\al_2E_2)(\ldots
 )(I-E_k+\al_kE_k)=
(I-F_1+\be_1F_1)(I-F_2+\be_2F_2)(\ldots )(I-F_s+\be_sF_s)$
 where the $E_j$ are orthogonal idempotents and the
 $\al_j$ are distinct and where the $F_l$ are orthogonal idempotents 
  and the $\be_j$ are distinct. Then $s=k$ and by
 reordering $F_t=E_t, \al_t=\be_t$.
\end{proposition}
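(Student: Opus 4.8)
The plan is to pass from the product form to the additive form of Lemma \ref{mut1} and then observe that a matrix written in that form wears its eigenspace decomposition on its sleeve, so that the idempotents are recovered intrinsically as the orthogonal projections onto the eigenspaces of $B$.

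First I would fix a convention. A factor $I-E_j+\al_jE_j$ with $\al_j=1$ is just $I$, so uniqueness can only be expected, and is here understood, when all $\al_j\neq 1$ and all $\be_l\neq 1$; this is consistent with the convention of Proposition \ref{not}, where the eigenvalue $1$ is not listed. Applying Lemma \ref{mut1} to each side gives $B=\al_1E_1+\cdots+\al_kE_k+G=\be_1F_1+\cdots+\be_sF_s+H$, where $G=I-\sum_jE_j$ and $H=I-\sum_lF_l$ are the (possibly zero) complementary idempotents. By Lemma \ref{trrank} the column spaces of $E_1,\dots,E_k,G$ are mutually orthogonal and together span $\C_n$; reading off the action of $B$, a vector in the column space of $E_j$ is sent to $\al_j$ times itself and a vector in the column space of $G$ is fixed. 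Hence $B$ is normal, and using that the $\al_j$ are distinct and $\neq 1$ I would argue that the column space of $E_j$ is exactly the $\al_j$-eigenspace of $B$: it is contained in that eigenspace, and conversely an eigenvector for $\al_j$ cannot meet the column space of any $E_{j'}$ with $j'\neq j$ (different eigenvalue) nor the column space of $G$ (that would force $\al_j=1$), so it must lie in the column space of $E_j$ because these spaces fill $\C_n$.

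The identical analysis applies to the $F_l$. Therefore $\{\al_1,\dots,\al_k\}$ and $\{\be_1,\dots,\be_s\}$ both coincide with the set of eigenvalues of $B$ other than $1$, so $s=k$, and after relabelling we have $\al_t=\be_t$ while $E_t$ and $F_t$ have the same column space for every $t$. Finally, writing $E_t=\sum_i u_iu_i^*$ and $F_t=\sum_i w_iw_i^*$ for orthonormal bases of this common subspace (Proposition \ref{cols}), Proposition \ref{unit} yields $E_t=F_t$; then $\sum_jE_j=\sum_lF_l$ forces $G=H$ as well, and the proof is complete.

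I expect the only delicate point to be the bookkeeping around the eigenvalue $1$: one must both exclude a listed $\al_j$ equal to $1$ (handled by the convention above) and ensure that each listed eigenspace is captured in its entirety rather than only partially. The latter is precisely where distinctness of the $\al_j$ and the completeness statement of Lemma \ref{trrank} — that the listed column spaces together with the column space of $G$ exhaust $\C_n$ — are used.
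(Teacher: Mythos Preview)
Your argument is correct and follows the same route as the paper's own proof: identify the $\al_j$ (respectively $\be_l$) as the eigenvalues of $B$, match them after reordering, and then invoke Proposition \ref{unit} (via Proposition \ref{cols}) to conclude that the corresponding idempotents coincide. You have in fact been more careful than the paper, both in flagging the necessary convention $\al_j\neq 1$ and in spelling out why the column space of each $E_j$ is exactly the $\al_j$-eigenspace of $B$ rather than merely contained in it; the paper's proof leaves these points implicit.
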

\begin{proof} Now $B$ has eigenvalues $\al_j$ occurring with
 multiplicity $\rank E_j$ and eigenmatrix $E_j$ and looking at it another way has eigenvalues
 $\be_k$ occurring with multiplicity $\rank F_k$. Thus $\al_1$
 must equal $\be_k$ for some $k$. We may assume this $k=1$ by
 reordering. Then $\rank E_1=\rank F_1$. Now $E_1 = v_1v_1^*+ v_2v_2^* + \ldots + v_tv_t^*$ for
 orthogonal unit $v_j$,  ($t=\rank E_1=\rank F_1$) and $F_1 = w_1w_1^* + w_2w_2^* +
 \ldots + w_tw_t^*$  for orthogonal unit vectors $w_j$. Now by Proposition
 \ref{unit},  $E_1 = F_1$. Similarly by reordering if necessary $E_j=F_j, \al_j=\be_j$ in general for $j=1,2, \ldots, k$ and of necessity $s=k$. \footnote{Induction is more complicated because we cannot assume  $(I-E_1+\al_1E)$ is invertible.} 
\end{proof} 

\section{Pure and mixed idempotents}

In order to obtain a  normal matrix as  a unique product of {\em basic} matrices it is necessary to collect the basics with the same eigenvalue, see Propositions \ref{not} and \ref{unique}. An idempotent is not a unique sum of idempotents of rank $1$. An idempotent of rank $1$ is often described as a {\em pure} idempotent by analogy with the mixed matrix in quantum theory, see Section \ref{mixed} below. 

It is now shown that  an idempotent matrix $E$ of rank $r$ may be written as the sum of special idempotents of rank $1$ in a {\em unique} way and an efficient algorithm is given.   

Let $E$ be an $n\ti n$ idempotent matrix of rank $r$.
Then $(I-E)$ is an idempotent matrix of rank $(n-r)$. Also $E$ has eigenvalues $\{1,0\}$.
Now $E.E= E $ and so the multiplicity of the eigenvalue $1$ is $\geq r$ since $E$ has rank $r$. Also $E.(I-E) = 0 = 0(I-E)$ and so the multiplicity of the eigenvalue $0$ is $\geq (n-r)$ as $(I-E)$ has rank $(n-r)$. Hence the multiplicity of the eigenvalue $1$ of $E$ is exactly $r$ and the multiplicity of the eigenvalue $0$ of $E$ is exactly $(n-r)$.

The following gives $E$ as a sum of rank $1$  idempotents from the column space of $E$. 

\begin{proposition}\label{idm} Let $\{e_1,e_2, \ldots , e_r\}$ be an orthonormal  basis for the column space of $E$. Define $E_i=e_ie_i^*$ for $i=1,2,\ldots, r$. Then
  $E= E_1+E_2+\ldots + E_r$.
\end{proposition}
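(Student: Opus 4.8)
The plan is to exploit the fact that $E$ is a symmetric idempotent, hence normal, so Proposition~\ref{gen1} applies: $E = \al_1 v_1v_1^* + \ldots + \al_n v_nv_n^*$ for an orthonormal basis $\{v_1,\ldots,v_n\}$ of $\C_n$ with the $\al_i$ equal to the eigenvalues of $E$. Since $E$ is idempotent, the discussion immediately preceding the proposition shows the eigenvalues are $0$ and $1$, with $1$ occurring exactly $r$ times. So, after reordering, $E = v_1v_1^* + \ldots + v_rv_r^*$, and each $v_i$ ($i\le r$) lies in the column space of $E$ (indeed $Ev_i = v_i$), giving an orthonormal set of $r$ vectors in the $r$-dimensional column space — hence an orthonormal basis of it.

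Next I would invoke Proposition~\ref{unit}: the given orthonormal basis $\{e_1,\ldots,e_r\}$ of the column space and the orthonormal basis $\{v_1,\ldots,v_r\}$ just produced both span the same space $W$ (the column space of $E$), so $e_1e_1^* + \ldots + e_re_r^* = v_1v_1^* + \ldots + v_rv_r^*$. Combining with the previous paragraph gives $E = E_1 + \ldots + E_r$ where $E_i = e_ie_i^*$, which is exactly the claim. (Each $E_i$ is a rank-$1$ symmetric idempotent by Proposition~\ref{one}, and they are mutually orthogonal since the $e_i$ are orthogonal, though that is not strictly needed for the stated equality.)

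The only genuine obstacle is making rigorous the claim that the multiplicity-$1$ eigenvectors of $E$ can be chosen inside the column space of $E$, i.e.\ that the column space of $E$ equals the $1$-eigenspace of $E$; but this is immediate, since $Ex = x$ forces $x$ to be in the image of $E$, and conversely any $x$ in the image is $x = Ey$ for some $y$, whence $Ex = E^2 y = Ey = x$. With that identification in hand, everything else is a direct appeal to Propositions~\ref{gen1} and~\ref{unit}, so no real computation is required.
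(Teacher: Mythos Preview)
Your argument is correct. It differs from the paper's proof in one structural respect worth noting. The paper works directly with the given orthonormal basis $\{e_1,\ldots,e_r\}$: it identifies the column space of $E$ with the $1$-eigenspace (as you also do), then extends $\{e_1,\ldots,e_r\}$ to a full orthonormal basis of $\C_n$ by adjoining an orthonormal basis $\{v_{r+1},\ldots,v_n\}$ of the $0$-eigenspace (the column space of $I-E$), observes that the resulting unitary $P$ diagonalises $E$, and reads off $E = \sum_{i\le r} e_ie_i^*$ directly. You instead first invoke Proposition~\ref{gen1} to get \emph{some} orthonormal decomposition $E = \sum_{i\le r} v_iv_i^*$, and then appeal to Proposition~\ref{unit} to transfer this to the given basis $\{e_i\}$. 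Your route is a clean reuse of results already proved in the paper and makes the basis-independence explicit; the paper's route avoids the detour through Proposition~\ref{unit} by building the diagonalising unitary out of the given basis from the start.
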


\begin{proof} The space of eigenvectors of $E$ corresponding to the eigenvalue $\lam = 1$ is the solution space of $(E-1I)\underline{v}=0$ which is the solution space of $(I-E)\underline{v}=0$ and this has dimension $r$. Now $(I-E)E=0$ and so the columns of $E$ form a basis for this solution space. Let $\{v_1,v_2, \ldots,, v_r\}$ be an orthonormal basis for this solution space which is an orthonormal basis for the columns of $E$.

  The space of eigenvectors of $E$ corresponding to the eigenvalue $\lam = 0$ is the solution space of $(E-0I)\underline{v}=0$ which is the solution space of $E\underline{v}=0$ and this has dimension $(n-r)$. Now $E(I-E)=0$ and so the columns of $I-E$ form a basis for this solution space. Let $\{v_{r+1}, v_{r+2}, \ldots, v_n\}$ be an orthonormal  basis for this solution space which is an orthonormal
  basis for the column space of $(I-E)$.

  Now vectors corresponding to distinct eigenvalues of the symmetric matrix $E$ are othogonal and so $\{v_1,v_2, \ldots, v_n\}$ is an orthonormal  basis for $\C_n$. Let $P=(v_1,v_2,\ldots, v_n)$. Then $P$ is a unitary matrix and

  $P^*EP = \diag(1,1, \ldots, 1, 0,0, \ldots, 0)$ where the $1$ occurs $r$ times and  the $0$ occurs $(n-r)$ times on the diagonal.

  Then $E = v_1v_1^* + v_2v_2^* + \ldots + v_rv_r^*$ as required.
\end{proof}

This expression for $E$ is not unique as any orthonormal basis for the column space of $E$ may be used.

An efficient algorithm for finding $E$ as a sum of idempotents of rank $1$ is then formulated as follows:

\begin{Algorithm}\label{pro}

  \begin{enumerate}

  \item\label{tee}

    Let $v$ be the first  non-zero column of $E$ which occurs at column $r\geq 1$ so that columns $\{1, \ldots, r-1\}$  consist of zeros.
\item Define $u=\frac{v}{||v||}$ so $u$ is a unit column vector.
\item Define $E_1=uu^*$. Then $E_1$ is rank $1$ symmetric idempotent. 
\item Let $F = E-E_1$.  If $F=0$ then $E=E_1$ and $E$ has rank $1$ and we are finished. If $F\not = 0$  then $F$ is a nonzero idempotent with   columns $1,2, \ldots, r$ all  zero vectors. Now let $E$ be replaced by $F$ and go back to item \ref{tee}.

  \end{enumerate}

\end{Algorithm}

 To apply the algorithm it is not necessary to know the rank of the idempotent beforehand and this comes out  at the end. The algorithm is efficient and at each step just involves forming an idempotent from the first non-zero column of a matrix and subtracting this from the matrix.   



\paragraph{Example 1} $E= \frac{1}{3} \begin{pmatrix} 2 &-1&1\\ -1 & 2 & 1 \\ 1 &1 & 2 \end{pmatrix}$. Then $E$ is a rank $2$ idempotent. The first column of $E$ is non-zero so start there and let

$E_1= \frac{1}{6}\begin{pmatrix} 2\\-1\\ 1 \end{pmatrix}(2,-1,1) =\frac{1}{6}
\begin{pmatrix}4 &-2&2 \\ -2 &1&-1 \\ 2&-1&1 \end{pmatrix}$

Then $E-E_1 = \frac{1}{2} \begin{pmatrix} 0 & 0 &0 \\ 0 & 1 & 1 \\ 0 & 1 & 1 \end{pmatrix} = E_2$. Now $E_2$ is an idempotent of rank $1$ and $E=E_1+E_2$.

\paragraph{Example 2} Let $E = \frac{1}{4}\begin{pmatrix} 3 & i & 1 & -i \\ -i & 3 & i & 1 \\ 1 & -i & 3 & i \\ i & 1 & -i & 3 \end{pmatrix}$.

Apply  the algorithm and get in turn $E_1,E_2, E_3$ idempotents of rank $1$ and  
$E= E_1+E_2+E_3$
where

$E_1= \frac{1}{12}\begin{pmatrix} 9 & 3i &3 & -3i \\ -3i &1& -i& -1\\
  3 & i& 1 &-i \\ 3i& -1 & i & 1 \end{pmatrix} $,
$E_2 = \frac{1}{6}\begin{pmatrix} 0 & 0 &0 & 0 \\ 0 & 4&2i & 2 \\ 0 & -2i & 1 & -i \\ 0 & 2 & i & 1 \end{pmatrix}$,
$E_3 = \frac{1}{2}\begin{pmatrix} 0 &0 & 0 & 0 \\ 0 & 0 & 0 & 0 \\ 0 &0 & 1 & i \\ 0&0 & -i & 1 \end{pmatrix}$.

$E_1$ is formed using the first (non-zero) column of $E$ and notice that the number of initial zeros increase from  $E_1$ to $E_2$ to $E_3$.

 For a  mixed matrix  $A=\sum_ip_i  v_iv_i^*$ with $0<p \leq 1, \sum_ip_i=1$, as in Section \ref{mixed}. Now as shown $A=\sum_ip_iE_i$ where the $p_i$ are distinct,  each $E_i$ is a basic  idempotent and this expression for $A$ is unique apart from the order of the $E_i$. Each $E_i$ is a sum of of rank $1$ idempotents in a non-unique manner and such a sum may be obtained from Proposition \ref{idm} in theory and by Algorithm \ref{pro} in an efficient algorithm. 
 


Let  $u=(\al_1,\al_2, \ldots, \al_n)\T$ be a column vector.  Say $\st (u) =0$ if $\al_1\neq 0$ (if initial entry of $u$ is non-zero) and say  $\st (u) = r$ if $\al_1=\al_2=\ldots = \al_r$ but $\al_{r+1} \neq 0$ (if $\al_{r+1}$ is the first non-zero entry of $u$).  Let  $E=uu^*$ then define $\st (E)=\st (u)$. Thus $E$ has first $\st (E) $ columns and first $\st (E)$ rows  consisting entirely of zeros. The $\st$ measures the number of initial zeros of $u$ and the number of initial rows and columns of $E$. 

\begin{theorem}\label{nut} Let $E$ be an idempotent. Then $E$ is uniquely a sum $E=E_1+E_2+ \ldots + E_r$ where each $E_i$ is a rank $1$ idempotent and $\st (E_1) < \st (E_2) < \ldots < \st (E_r)$.
\end{theorem}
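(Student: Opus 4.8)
The plan is to prove existence and uniqueness separately, in each case inducting on $\rank E$; here, as throughout the paper, ``idempotent'' is read as \emph{symmetric} idempotent.

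\textbf{Existence.} I would run Algorithm \ref{pro} and check that one step behaves as claimed. Let $E\neq 0$ and let $v$ be its first non-zero column, occurring in position $t+1$, so columns $1,\dots,t$ of $E$ vanish and, by symmetry, so do rows $1,\dots,t$. From $E^2=E$ we get $Ev=E^2e_{t+1}=Ee_{t+1}=v$, so $u:=v/\|v\|$ is a unit $1$-eigenvector of $E$; put $E_1=uu^*$. Then $EE_1=(Eu)u^*=E_1$ and, since $E^*=E$, also $E_1E=(EE_1)^*=E_1$, so $F:=E-E_1$ is a symmetric idempotent with $E_1F=FE_1=0$, whence $\rank F=\rank E-1$ by Lemma \ref{rank}. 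To see that $F$ has strictly more initial zero columns than $E$: the first $t$ entries of $u$ vanish (they do for $v$), so the first $t$ rows and columns of $E_1$ vanish; moreover $E=EE^*$ forces $v_{t+1}=E_{t+1,t+1}=\sum_k|E_{t+1,k}|^2=\sum_k|v_k|^2=\|v\|^2$, so $u_{t+1}=\|v\|$ is real and column $t+1$ of $E_1$ is $\overline{u_{t+1}}\,u=\|v\|u=v$, i.e.\ column $t+1$ of $E$; hence column $t+1$ of $F$ is $0$. Iterating with $E$ replaced by $F$ peels off rank-$1$ idempotents $E_1,E_2,\dots$ whose numbers of initial zero columns (their $\st$-values) strictly increase, and the process stops after exactly $\rank E$ steps since the rank drops by one each time; this is the asserted decomposition, and is precisely the output of Algorithm \ref{pro}.

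\textbf{Uniqueness.} First, the length $r$ is forced: for any decomposition of the stated form $r=\sum_i\tr E_i=\tr E=\rank E$ by the rank$=$trace property of idempotents (Lemma \ref{rank}), so two such decompositions have the same number of summands. Now induct on $r$, the case $E=0$ being vacuous. Let $E=E_1+\cdots+E_r=F_1+\cdots+F_r$ with $E_i=u_iu_i^*$, $F_i=w_iw_i^*$ and $\st(u_1)<\cdots<\st(u_r)$, $\st(w_1)<\cdots<\st(w_r)$; put $t=\st(u_1)$. Each $u_i$ has $\st(u_i)\ge t$, so columns $1,\dots,t$ of every $E_i$, hence of $E$, vanish; and for $i\ge 2$ we have $\st(u_i)\ge t+1$, so $(u_i)_{t+1}=0$ and column $t+1$ of $E_i$ vanishes. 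Therefore column $t+1$ of $E$ equals column $t+1$ of $E_1$, namely the non-zero vector $\overline{(u_1)_{t+1}}\,u_1$, so the first non-zero column of $E$ is at position $t+1$; the same argument on the $F$-side shows it is at position $\st(w_1)+1$ and that column $t+1$ of $E$ is a non-zero multiple of $w_1$, forcing $\st(w_1)=t$ and $w_1=cu_1$ for a scalar $c$ with $|c|=1$ (both are unit vectors), so $F_1=w_1w_1^*=u_1u_1^*=E_1$. Since column $t+1$ of $E$ is a non-zero multiple of $u_1$, we have $Eu_1=u_1$ (arguing as in the existence step), so $E-E_1$ is a symmetric idempotent of rank $r-1$ carrying two decompositions of the stated form; the inductive hypothesis gives $E_i=F_i$ for all $i$, completing the proof.

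\textbf{Main obstacle.} The one point that is more than bookkeeping is the claim in the existence step that subtracting $uu^*$ \emph{strictly} increases the number of initial zero columns --- equivalently, that column $t+1$ of $E$ is exactly $v=\|v\|u$ and not merely a multiple of $u$. This hinges on the identity $v_{t+1}=\|v\|^2$ for the relevant diagonal entry, which comes from combining symmetry and idempotence via $E=EE^*$. Everything else reduces to tracking $\st$-values and to the rank-additivity of orthogonal symmetric idempotents furnished by Lemma \ref{rank}.
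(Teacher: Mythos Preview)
Your proof is correct and follows the same plan as the paper: existence by running Algorithm~\ref{pro}, and uniqueness by reading off the first summand from the first non-zero column of $E$ and inducting. The paper's own proof is terse---it simply asserts that the first non-zero column of $E$ determines $E_1=F_1$---whereas you supply the details (notably the identity $v_{t+1}=\|v\|^2$ from $E=EE^*$, which forces column $t+1$ of $uu^*$ to equal column $t+1$ of $E$ exactly, and the verification that $E-E_1$ is again a symmetric idempotent so that induction applies); these are precisely the points the paper leaves implicit.
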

\begin{proof} Algorithm \ref{pro} shows that has such an expression for $E$ exists. 
The process involves  constructing first an idempotent $E_1$ such that 
 $E-E_1$ is an idempotent but also  that $\st (E-E_1) < \st (E)$.  Now  work with $(E- E_1)$ and proceed until the zero matrix is obtained. 

 Suppose now $E= E_1 + E_2 + \ldots + E_r = F_1+ F_2 + \ldots + F_s$ where 
$\st (E_1) < \st (E_2) < \ldots < \st (E_r)$ and $\st (F_1) < \st (F_2) < \ldots < \st (F_s)$  where the $E_i$ and $F_j$ are rank $1$ idempotents. 
 Then $r=s$ as this is the rank of $E$. Now look at the first non-zero column of $E_1$ which is the first non-zero column of $E$. From this get $E_1=F_1$ and then by induction that $E_i=F_i$ for $ i= 1,2,\ldots, r$.
 
 \end{proof} 

These results are applied below in Section \ref{mixed} to derive a uniqiue expression for a mixed matrix as the sum of rank $1$ matrices of the form given in Theorem \ref{nut}. 


\subsection{Mixed matrix}\label{mixed} 

Suppose 
$A=\sum_{i=1}^k p_i u_iu_i^*$  with  $p_i\neq 0$ where $\{u_1, u_2, \ldots, u_k\}$ are mutually orthogonal unit (column) vectors in $\C_n$.  
 Complete $\{u_1, u_2, \ldots, u_k\}$ to an orthogonal unit basis  $\{u_1, u_2, \ldots, u_n\}$  for $\C_n$. Let $E_i=v_iv_i^*$ and   $p_j= 0$ for $ j> k$. Now  $ A=\sum_{i=1}^n p_i u_iu_i^* = \sum_{i=1}^np_iE_i = \prod_{j=1}^k(I-E_j+p_jE_j)(I-E)$  
where $\{E_1, E_2, \ldots, E_k, E\}$ is a complete orthogonal set of idempotents. Basic matrices with the same  $p_i$ may be collected together. Note that $E=I-\sum_{j=1}^kE_j$ as $\{E_1, E_2,\ldots, E_k,E\}$ is a complete orthogonal set of idempotents. 

It is easy to check that $A^+=\prod_{j=1}^k(I-E_j+p_j^{-1}E_j)(I-E)$ is the {\em pseudo inverse} of $A$ which is useful for certain calculations. 

A particular case of this is where $A$ is  a {\em mixed density matrix} $A=\sum_{i=1}^k p_i u_iu_i^*$ where $\{u_1, u_2, \ldots , u_k\}$ are orthogonal unit vectors and $\sum p_i=1, 0<p_i\leq 1$. 
It is known that this expression for $A$ is not unique.

But $A= \sum_{i=1}^k p_i E_i$ where  the $E_i = u_iu_i^*$ are  rank $1$ idempotents. Then $A= \prod_{i=1}^k(I-E_i+p_iE_i)(I-F)$ where $F=I-\sum{j=1}^kE_j$.  Now by Lemma \ref{join} the basic products with the same $p_i$ can be gathered together to get $A= \prod_{j=1}^s(I-F_j+\hat{p}_jF_j)(I-F)$  where  $\{F_1, F_2, \ldots, F_s, F\}$ is a complete orthogonal set of idempotents with $F=I-\sum_{j=1}^sF_j$, the $\hat{p}_j$ are unique and $\sum_{j=1}^s\hat{p_j}=1, 0 < \hat{p}_j \leq 1$. This expression for $A$ is unique by Proposition  \ref{unique}. This means that $A= \sum_{i=1}^s\hat{p}_iF_i $ where the $\{F_j\}$ are orthogonal idempotents and the $\hat{p}_j$ is now a  unique expression for $A$.  Each $F_j$ is a sum $\sum_{i=1}^{s_j}u_{i_j}u_{i_j}^*$ of orthogonal rank $1$ idempotents and $F_j$ is of rank $s_j$. 

Note that $AF_j=\hat{p}_jF_j$ and $\hat{p}_j$ occurs with  multiplicity equal to rank $F_j$. 

Using Theorem \ref{nut} the following unique expression is obtained.

\begin{theorem} A density matrix $A$ is uniquely of the form
 $A= \sum_j \lam_j (E_{j,1}+ E_{j,2} + \ldots + E_{j,s_j})$ with distinct $\lam_j$ and  such $\st (E_{j,1}) < \st (E_{j,2}) < \ldots < \st (E_{j,s_j})$ for each $j$. 
\end{theorem}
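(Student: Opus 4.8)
The plan is to combine the two uniqueness results already in hand: the mixed-matrix decomposition derived just above (writing $A=\sum_{j=1}^{s}\hat p_j F_j$ with distinct $\hat p_j$ and orthogonal idempotents $F_j$, uniquely so by Proposition \ref{unique}), and Theorem \ref{nut} applied to each $F_j$ individually. So first I would invoke the preceding paragraph to obtain $A=\sum_{j}\lam_j F_j$ where the $\lam_j=\hat p_j$ are the distinct eigenvalues of $A$ (with $0<\lam_j\le 1$, $\sum_j\lam_j\operatorname{rank}F_j=\operatorname{tr}A$ if one wants to record it, though that is not needed), the $F_j$ are orthogonal symmetric idempotents, and this list $\{(\lam_j,F_j)\}$ is unique up to order.

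Next, for each fixed $j$, apply Theorem \ref{nut} to the idempotent $F_j$: it is uniquely a sum $F_j=E_{j,1}+E_{j,2}+\ldots+E_{j,s_j}$ of rank $1$ idempotents with strictly increasing $\st$, i.e. $\st(E_{j,1})<\st(E_{j,2})<\ldots<\st(E_{j,s_j})$, where $s_j=\operatorname{rank}F_j$. Substituting these expressions into $A=\sum_j\lam_j F_j$ gives existence of the asserted form $A=\sum_j\lam_j(E_{j,1}+\ldots+E_{j,s_j})$.

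For uniqueness, suppose $A=\sum_j\lam_j(E_{j,1}+\ldots+E_{j,s_j})=\sum_l\mu_l(G_{l,1}+\ldots+G_{l,t_l})$ are two such expressions with distinct $\lam_j$, distinct $\mu_l$, and the $\st$-increasing condition holding within each block. Set $F_j=\sum_i E_{j,i}$ and $H_l=\sum_i G_{l,i}$; these are symmetric idempotents (by Lemma \ref{trrank}, since within a block the $E_{j,i}$ are rank $1$ idempotents with distinct $\st$, hence orthogonal — orthogonality of two rank $1$ symmetric idempotents $uu^*,ww^*$ with different numbers of initial zeros follows because the column spaces are distinct one-dimensional eigenspaces of the symmetric matrix $F_j$, or more directly because Theorem \ref{nut}'s construction produces mutually orthogonal summands). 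Then $A=\sum_j\lam_j F_j=\sum_l\mu_l H_l$ are two decompositions of the kind covered by Proposition \ref{unique} (equivalently by Proposition \ref{not} applied to the normal matrix $A$), so after reindexing $\mu_l=\lam_l$ and $H_l=F_l$. Finally, with $F_j=H_j$ fixed, the uniqueness clause of Theorem \ref{nut} forces $G_{j,i}=E_{j,i}$ for all $i$ (and $t_j=s_j$), completing the argument.

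The main obstacle — really the only non-bookkeeping point — is justifying that the blocks $\{E_{j,i}\}_i$ within a fixed $j$, and across different $j$, are mutually orthogonal, so that Lemma \ref{trrank} and Proposition \ref{unique} genuinely apply. Across different $j$ this is automatic since $F_j$ and $F_{j'}$ are orthogonal and $E_{j,i}\le F_j$, $E_{j',i'}\le F_{j'}$ (formally, $E_{j,i}=E_{j,i}F_j$ and similarly, so the product vanishes); within a block it is exactly what the proof of Theorem \ref{nut}/Algorithm \ref{pro} delivers, since each successive idempotent is carved from a matrix all of whose earlier columns and rows already vanish. Once this is pinned down, the rest is a direct appeal to Proposition \ref{unique} and Theorem \ref{nut}, so I would keep the write-up short and point to those two results.
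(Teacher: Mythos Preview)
Your approach is exactly the paper's: the paper offers no separate proof, merely the sentence ``Using Theorem \ref{nut} the following unique expression is obtained,'' which amounts to combining the uniqueness of $A=\sum_j\hat p_jF_j$ (Proposition \ref{unique}) with Theorem \ref{nut} applied to each $F_j$. Your existence argument and the two-stage uniqueness reduction are precisely this.

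One caution on the point you yourself flag. In the uniqueness direction you want, for an \emph{arbitrary} competing expression $A=\sum_l\mu_l(G_{l,1}+\ldots+G_{l,t_l})$, to conclude that $H_l=\sum_i G_{l,i}$ is an idempotent. Your two proposed justifications do not do this: having strictly increasing $\st$ does \emph{not} by itself force rank~$1$ symmetric idempotents to be orthogonal (e.g.\ $u=(1,1)^{\mathsf T}/\sqrt2$ and $w=(0,1)^{\mathsf T}$ have $\st=0<1$ yet $u^*w\ne0$), and appealing to Algorithm \ref{pro} is circular since that only covers the \emph{constructed} decomposition, not a hypothetical rival one. The paper does not address this either; the cleanest reading is that the theorem implicitly carries over from Theorem \ref{nut} the standing hypothesis that the $E_{j,i}$ are mutually orthogonal within each block (and the $F_j$ orthogonal across blocks), in which case your argument goes through verbatim. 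If you want to prove the stronger statement with no orthogonality assumed, you would need an additional argument, and neither you nor the paper supplies one.
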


\subsection{Roots}\label{root}

A nice formula for roots  of  normal matrices may now be obtained directly from their expression as a product of basic matrices. Roots of unitary matrices/quantum gates will then follow. 

Powers of normal  matrices are easy to obtain: 
  \begin{proposition}\label{roots} Let the normal  matrix  $B$ be  expressed as a product of basic matrices by 
  $B=\prod_{j=1}^k (I-E_j+\al_jE_j)$.
  Then $B^m = \prod_{j=1}^k (I-E_j+\al^mE_j)$.
  \end{proposition}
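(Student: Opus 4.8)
The plan is to reduce everything to the key algebraic identity for products of basic matrices with orthogonal idempotents, namely Lemma \ref{mut1}, which expands $\prod_{j=1}^k(I-E_j+\al_jE_j)$ as $\sum_{j=1}^k\al_jE_j + (I-\sum_{j=1}^kE_j)$ whenever the $E_j$ are mutually orthogonal symmetric idempotents. Since the hypothesis of Proposition \ref{not} guarantees that $\{E_1,\dots,E_k\}$ is an orthogonal set of idempotents, this expansion is available for $B$ itself, and the same expansion is available for the candidate $B^m$.

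First I would write $F = I - \sum_{j=1}^k E_j$, so that by Lemma \ref{mut1} we have $B = \al_1E_1 + \dots + \al_kE_k + F$, a sum of scalar multiples of mutually orthogonal idempotents. Next I would compute $B^m$ directly from this additive form: because the $E_i$ are mutually orthogonal idempotents and $F$ is an idempotent orthogonal to every $E_i$ (Lemma \ref{trrank}), the cross terms vanish and $E_i^m = E_i$, $F^m = F$, so $B^m = \al_1^mE_1 + \dots + \al_k^mE_k + F$. This is a one-line induction on $m$ (or a direct multinomial expansion in which only the ``diagonal'' terms survive). Finally I would run Lemma \ref{mut1} in the reverse direction on the exponents $\al_j^m$: since $\{E_1,\dots,E_k\}$ is still an orthogonal set of idempotents and $I-\sum_{j=1}^kE_j = F$, the expression $\al_1^mE_1+\dots+\al_k^mE_k+F$ equals $\prod_{j=1}^k(I-E_j+\al_j^mE_j)$, which is the claimed formula. (I would also note the typo that the statement should read $\al_j^m$ rather than $\al^m$.)

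The only mildly delicate point is handling the completing idempotent $F$: if $\{E_1,\dots,E_k\}$ already sums to $I$ then $F=0$ and the argument is unchanged, and if not then one must remember that $F$ carries eigenvalue $1$, so $F^m=F$ contributes the unchanged block to $B^m$ — exactly matching what the reverse application of Lemma \ref{mut1} produces. So there is no real obstacle here; the proposition is essentially an immediate corollary of Lemma \ref{mut1} applied twice, once forward to linearize the product and once backward to re-assemble it with the powers $\al_j^m$. One could alternatively argue via the unitary diagonalization $P^*BP = D$ and $B^m = PD^mP^*$, but the idempotent-expansion route is cleaner and keeps everything in the language the paper has set up.
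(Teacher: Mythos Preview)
Your argument is correct, but it takes a slightly different route from the paper's. The paper's proof is a one-line remark: by induction and the fact that the $E_j$ commute. The intended reading is that the basic factors $(I-E_j+\al_jE_j)$ themselves commute (since the $E_j$ are mutually orthogonal), so $B^m = \prod_{j=1}^k (I-E_j+\al_jE_j)^m$, and then each single-idempotent factor is handled by the identity $(I-E+\al E)(I-E+\be E) = I-E+\al\be E$ from the Note preceding Proposition~\ref{cols}, giving $(I-E_j+\al_jE_j)^m = I-E_j+\al_j^mE_j$. Your approach instead linearizes via Lemma~\ref{mut1} to the additive form $\sum_j\al_jE_j + F$, powers that sum using orthogonality, and then repackages via Lemma~\ref{mut1} again. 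Both arguments are equally elementary; the paper's stays in multiplicative form throughout and avoids introducing the completing idempotent $F$, while yours makes the spectral decomposition explicit and handles the $F$ block transparently. Your observation about the typo $\al^m \mapsto \al_j^m$ is also correct.
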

  
  \begin{proof} 
    The proof follows directly by induction and noting that the $E_j$ commute.
\end{proof}

Let $U= \prod_{j=1}^k(I-E_j+\al_jE_j)$ be the expression of
an $n^{th}$ root of $I_n$ as a product of basic matrices.  The $E_j$ are mutually orthogonal idempotents and the $\al_j\neq 0$.

  Then $U^n= \prod_{j=1}^k(I-E_j+\al^nE_j)=I_n$. Hence $\al^n=1$ and so $\al$ is an $n^{th}$ root of unity. Hence $\al_j = e^{i\theta_j}$ where $n\theta_j =2\pi r_j$ for $r_j\in \Z, 1\leq r_j\leq n-1$. Thus $\theta_j= \frac{2\pi r_j}{n}$ and so

  $$U= \prod_{j=1}^k(I-E_j+e^{i\frac{2\pi r_j}{n}}E_j)$$

  This gives normal matrices which  the $n^{th}$ roots of $I_n$. The $E_j$ can be any set of orthogonal idempotents and the $r_j$ can be any integers between $1$ and $n-1$.
  \begin{proposition} The $n^{th}$ roots of $I_n$ which are normal consist of matrices of the following type: $$U= \prod_{j=1}^k(I-E_j+e^{i\frac{2\pi r_j}{n}}E_j)$$
    where $\{E_1, E_2, \ldots, E_k\}$ is an orthogonal set of idempotents in $\C_{n\times n}$ and $r_j$ are integers satisfying $1\leq r_j\leq n-1$
  \end{proposition}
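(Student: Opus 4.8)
The plan is to derive this directly from Propositions \ref{not}, \ref{roots} and \ref{notunit}, checking the two inclusions separately, since all the real work has already been done.

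For the first inclusion, suppose $U$ is normal with $U^n=I_n$. By Proposition \ref{not} write $U=\prod_{j=1}^k(I-E_j+\al_jE_j)$ with the $\al_j$ distinct and $\{E_1,\dots,E_k\}$ an orthogonal set of symmetric idempotents. By Proposition \ref{roots}, $U^n=\prod_{j=1}^k(I-E_j+\al_j^nE_j)$. Since $U^n$ acts on each idempotent by $U^nE_j=\al_j^nE_j$ while $I_nE_j=E_j$, and $E_j\neq 0$, I would conclude $\al_j^n=1$ for every $j$; hence each $\al_j$ is an $n^{th}$ root of unity and can be written $\al_j=e^{i\frac{2\pi r_j}{n}}$ with an integer $r_j$, $0\le r_j\le n-1$. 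Any factor with $r_j=0$ has $\al_j=1$, so $(I-E_j+\al_jE_j)=I$ and can be dropped from the product without changing $U$ (this is exactly the ``hidden'' eigenvalue $1$ discussed after Proposition \ref{not}); after removing all such factors $U$ is displayed in precisely the asserted form, with each $r_j$ in $\{1,\dots,n-1\}$ (the empty product $k=0$, i.e.\ $U=I_n$, being the degenerate case).

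For the reverse inclusion, let $U=\prod_{j=1}^k(I-E_j+e^{i\frac{2\pi r_j}{n}}E_j)$ with $\{E_1,\dots,E_k\}$ orthogonal symmetric idempotents and $1\le r_j\le n-1$. Then $U$ is unitary, hence normal, by Proposition \ref{notunit}; and by Proposition \ref{roots}, $U^n=\prod_{j=1}^k(I-E_j+e^{i2\pi r_j}E_j)=\prod_{j=1}^k(I-E_j+E_j)=I_n$ because $e^{i2\pi r_j}=1$ for integer $r_j$.

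I do not expect a genuine obstacle: the statement is essentially a repackaging of the cited propositions. The only point needing a little care is the step in the first inclusion where $U^n=I_n$ is converted into the pointwise conditions $\al_j^n=1$ — this relies on the orthogonality and non-vanishing of the $E_j$, so that $U^n$ really acts as multiplication by $\al_j^n$ on the range of $E_j$ — together with the harmless deletion of any factor with $r_j=0$ needed to force the range $1\le r_j\le n-1$. One should also note that $\{E_1,\dots,E_k\}$ is not required to be complete; the eigenvalue unaccounted for is $1$, which is consistent with $U^n=I_n$.
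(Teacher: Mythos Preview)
Your proposal is correct and follows essentially the same route as the paper (whose argument appears in the paragraph immediately preceding the proposition): decompose $U$ into basic factors, take the $n^{th}$ power via Proposition \ref{roots}, and read off that each $\al_j$ is an $n^{th}$ root of unity. In fact your write-up is more careful than the paper's in justifying why $U^n=I_n$ forces each $\al_j^n=1$, in explicitly discarding the trivial $r_j=0$ factors, and in checking the reverse inclusion.
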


  
  Suppose $V= \{E_1,E_2, \ldots, E_k\}$ is a set of orthogonal symmetric
  idempotent matrices in $\C_n$. If $V$ is not complete then there exists $S= \{E_{k+1}, E_{k+2}, \ldots, E_s\}$   such that $\{E_1,E_2,\ldots, E_s\}$ is a complete orthogonal set of idempotents. The set $S$ is not unique but we shall refer to an $S$ as a  {\em complementary} set of $V$ and say that $S$ completes $V$. In particular $\{I-\sum_{j=1}^kE_j\}$ is the minimal set which completes $V$.

  Suppose  $U$ is the $n^{th}$ root of a basic matrix $W$ and $W= \prod_{j=1}^t(I-F_j+\be_jF_j)$ is the expression for $W$ as a product of basic matrices where the   $\be_j$ are all different.
  Let $U=\prod_{j=1}^s(I-E_j+\al_jE_j)$ be the expression for $U$ as a product of basic matrices. Some of  the $(I-E_j+\al_jE_j)$ may be $n^{th}$ roots of $I_n$ and we separate these out so that $U=\prod_{j=1}^k(I-E_j+\al_jE_j)P$ where $P$ is an $n^{th}$ root of $I_n$ and commutes with each $E_j$; reordering the $E_j$ may be necessary. Now $P$ is a product of basic matrices involving only idempotents in a complementary set of $\{E_1, E_2, \ldots, E_s\}$.

  Then  $U^n= \prod_{j=1}^k (I-E_j+\al^nE_j)P^n = \prod_{j=1}^k(I-E_j+\al^nE_j)$.
Now compare this with $W$. By uniqueness  and possible reordering get that $E_j = F_j$ and $\al_j^n=\be_j$. Thus if  $\al_j = r_je^{i\theta_j}$ and $\be_j=s_je^{i\gamma_j}$ then $r_j^n=s_j$ and $n\theta_j = \gamma_j+2\pi r_j$.  Now $n\_j=2\pi r_j +s_j$ for $0< s_j < 2\pi$ and so $e^{in\theta_j}=e^{is_j}$.



    However a normal  matrix has many $n^{th}$ roots and  we now proceed to directly find all these.

Suppose $U= \prod_{j=1}^k (I-E_j+\al_jE_j)$.  Now $\sum E_j$ may not be $I$ and in this case define $F= (I-\sum E_j)$. Then $\{E_1, E_2, \ldots, E_k, F\}$ is a complete orthogonal set of idempotents.  Now $I= I - F + e^0F$ and we need to attach this to $U$:
  $U= \prod_{j=1}^k (I-E_j+\al_jE_j)(I-F+e^0F)$. Suppose $n^{th}$ roots of $U$ are required. Apply the usual way of obtaining an $n^{th}$ root of unity in $\C$ and write:
  $U= \prod_{j=1}^k (I-E_j+\al_je^{i 2r_j\pi}E_j)(I-F+e^{2r\pi}F)$
  for positive integers $r_j,r$ from $0$ to $n-1$.
  Then $U^{\frac{1}{n}}= \{\prod_{j=1}^k (I-E_j+\al_j^{\frac{1}{n}}e^{i\frac{\theta_j+2r_j\pi}{n}}E_j)(I-F+e^{\frac{2r\pi}{n}}F)\}$
  and this is for any $r_j, r$ between $0$ and $n-1$. Here $\al^{\frac{1}{n}}$ refers to real $n^{th}$ of the the modulus of $\al$. This gives all the $n^{th}$ roots of $U$ and they are expressed in a unique form as a product of basic matrices. If the expression for $U$ has $k+1$ basic matrices, including $F\neq 0$, then get $n^{k+1}$ different $n^{th}$ roots of $U$.

  \subsubsection{Examples:} Consider the examples \ref{example}. 
\begin{enumerate} \item Example \ref{oneone} uses the matrix 
$U= \begin{ssmatrix} 0 & 1 \\ 1 & 0 \end{ssmatrix}$ with  $E_1 = \frac{1}{2}\begin{ssmatrix} 1 & 1 \\ 1 & 1 \end{ssmatrix},
E_2 = \frac{1}{2}\begin{ssmatrix} 1 & -1 \\ -1 & 1 \end{ssmatrix}$ and   $U = I - E_2 +e^{\pi}E_2$.
The square roots of $U$  are then 
$\{I - E_2 +e^{i\frac{\pi}{2}}E_2, I - E_2 +e^{i\frac{3\pi}{2}}E_2\}$ plus a product of these using  any square roots of $I$ obtained from a complementary set of $\{E_2\}$. The only complementary set of $\{E_2\}$ is $\{E_1\}$. A basic matrix  involving $E_1$, besides the identity, which is a square root of $I$ is $I-E_1+e^{i\pi}E_1$. Thus the square roots of $U$ are
$\{(I - E_2 +e^{i\frac{\pi}{2}}E_2)(I-E_1+ e^{ik\pi}E_1), (I - E_2 +e^{i\frac{3\pi}{2}}E_2)(I-E_2+e^{is\pi}E_2)\}$ where $k,s$ can be $0$ or $1$. 
  
\item Example \ref{two} uses  the orthogonal matrix 
$U=\begin{ssmatrix} \cos \theta & \sin \theta \\ -\sin \theta & \cos
    \theta \end{ssmatrix}$. Then  
$U=(I-E_1+e^{i\theta}E_1)(I-E_2+e^{-i\theta}E_2)$ 
with $\{E_1 = \frac{1}{2}\begin{ssmatrix}1 & -i \\ i & 1
\end{ssmatrix}, E_2 = \frac{1}{2}\begin{ssmatrix}1 & i \\ -i & 1
\end{ssmatrix} \}$.
Here $\{E_1,E_2\}$ is a complete orthogonal set of idempotents so there is no complementary set to consider. 

The third roots of $U$ are: 
$\{ (I-E_1+e^{i\frac{\theta+2r\pi}{3}}E_1)(I-E_2+e^{-i\frac{\theta+2s\pi}{3}}E_2)\}$ where $r,s$ can have values from $\{0,1,2\}$.   

\end{enumerate}

\section{Logic gates}\label{open}
Expressions as products of basic matrices are found 
for  common quantum logic gates. 

\begin{itemize}
 \item{\bf Hadamard Gate} The unitary matrix of the the Hadamard Gate is $H=\frac{1}{\sqrt{2}}\begin{pmatrix} 1 & 1 \\ 1 & -1 \end{pmatrix} $.
Then  $H^2=I$ and $H$ has eigenvalues $\{1, -1\}$.
 Thus by Proposition \ref{cor}  $H$ can be expressed as $H= (I- E-E)$. 
 Solving for $E$ get
  $E = \frac{1}{2}\begin{pmatrix} 1 - \frac{1}{\sqrt{2}} &
	       -\frac{1}{\sqrt{2}} \\ -\frac{1}{\sqrt{2}} & 1 +
 \frac{1}{\sqrt{2}}\end{pmatrix} $.
Thus    $H= (I- E+e^{i\pi}E) = (I-E-E)= (I-2E)$
    expresses $H$ as a product of (minimal) basic unitary matrices where $E$ is as above. $H$ is itself a minimal basic matrix.
    Then $\{E,F\}$ with $F=I-E$ is  a complete orthogonal set of idempotents. Now $HE=-E$ and $HF=F$ so $H$ `reverses' $E$ and leaves $F$ alone.  
    
    Roots of $H$ are immediate: For example a square root of $H$ is
    $(I- E+ e^{i\frac{\pi}{2}}E) = (I- E -iE)$. All the square roots of $H$ may be obtained  by methods of  Section \ref{root} and these are $(I-E+e^{i\frac{r\pi}{2}}E)(I- F+ e^{is\pi}F)$ where $r\in\{1,3\}, s\in\{0,1\}$.
The action of $H$ on the idempotents is given by $HF=F, HE=e^{i\pi}E$;  $F$ is left
    alone by $H$ and $E$ is changed  to $e^{i\pi}E$. 
 



     \item {\bf Pauli gates:}
The Pauli gates are symmetric unitary matrices and hence satisfy $X^2=I$ and have eigenvalues $\{1,-1\}$.  Thus by Proposition \ref{cor} they have the form $(I-E+e^{i\pi}E)= I-2E$ for an idempotent $E$. 
It is now a matter of establishing $E$ in these cases. 
    
\begin{itemize} \item{\bf Pauli X-gate}  is $X= \begin{pmatrix} 0 & 1 \\ 1 & 0 \end{pmatrix}$ and has eigenvalues $\{1,-1\}$. 
        Thus $I-2E= X$ for a symmetric idempotent $E$. Solving for $E$ gives  giving that $ E=  \begin{pmatrix}\frac{1}{2} & - \frac{1}{2} \\ -\frac{1}{2} & \frac{1}{2}\end{pmatrix}$.
Hence  $X 
= (I-E+e^{i\pi}E)$.
In this form a square root of $X$ is easy to find and one such is  $(I-E+e^{i\frac{\pi}{2}}E)$. By section \ref{root} all square roots of $X$ are  $\{\sqrt{X}\} = \{(I-E+e^{i\frac{r\pi}{2}}E)(I-F+e^{i\frac{s\pi}{2}}F)$ where $r\in\{1,3\}, s\in\{0,1\}$ and $F=I-E =
\begin{pmatrix} \frac{1}{2} &\frac{1}{2} \\\frac{1}{2} & \frac{1}{2}\end{pmatrix}$.

\item{\bf Pauli Y-gate} is $Y=\begin{pmatrix} 0 & -i \\ i & 0 \end{pmatrix}$.
Then $Y = I-E+e^{i\pi}E$ where $E= \begin{pmatrix}
\frac{1}{2} & \frac{i}{2} \\ -\frac{i}{2} & \frac{1}{2}
\end{pmatrix}$.

Roots of Y-gate are easy to obtain:  for example
$(I-E+e^{i\frac{\pi}{4}}E)$ is a fourth root and  all fourth roots may be directly found by structures  of Section \ref{root}. 

\item{\bf Pauli Z-gate} is $Z=\begin{pmatrix} 1&0\\ 0 & -1\end{pmatrix}$. Then
$Z= 
    (I-E+e^{i\pi}E)= I-2E$ where $E= \begin{pmatrix} 0&0\\ 0 & 1\end{pmatrix}$.

\end{itemize}
The Pauli gates are themselves (minimal) basic matrices. 
        
   \item{\bf Phase shift gate:}     The Phase shift unitary matrix is $Ph=\begin{pmatrix} 1&0\\ 0 & e^{i\theta}\end{pmatrix}$ and has eigenvalues $\{1,e^{i\theta}\}$. Applying the general result gives 
      $Ph=(I-F+e^{i\he}F)$ where $F=\begin{pmatrix} 0&0\\ 0 & 1\end{pmatrix}$. This expresses $Ph$ as a minimal basic unitary matrix.  Roots of $Ph$ are easily obtained by the method of section \ref{root}. For example a  third root of $Ph$ is $(I-F+e^{i\frac{\he}{3}}F)$ but there are others, see Section \ref{root}. 

\item{\bf NOT and Square root of NOT:}
The NOT gate is $NOT= \begin{pmatrix} 0 & 1 \\ 1 & 0 \end{pmatrix}$.
             This has eigenvalues $\{1,-1\}$.  Then  $NOT = (I - E + e^{i\pi}E)$.where $E=
			 \frac{1}{2}\begin{pmatrix}1 & -1 \\ -1 & 1
				    \end{pmatrix}$.  
              
              Then  (one value) of $ \sqrt{NOT} = I - E + e^{i\frac{\pi}{2}}E = \frac{1}{2}\begin{pmatrix} 1+i & 1-i \\ 1 - i & 1+i \end{pmatrix} $.
              For example it is possible to define $\sqrt{\sqrt{NOT}} = I- E+
              e^{i\frac{\pi}{4}}E$. 
                
            \item{\bf Swap, Square root of Swop:}
The Swap gate has matrix
$ Sw= \begin{ssmatrix} 1 &0&0&0 \\0&0&1&0\\0&1&0&0 \\ 0&0&0&1 \end{ssmatrix}$.
This has eigenvalues $\lam = 1$, (three times) and $\lam = -1$.
Then with  $E= \begin{ssmatrix} 0 &0&0&0 \\0&\frac{1}{2}&-\frac{1}{2}&0\\0&-\frac{1}{2}&\frac{1}{2}&0 \\ 0&0&0&0 \end{ssmatrix}$ it is seen that
$Sw= I- E +e^{i\pi}E$.

Then a square root is $\sqrt{Sw} = I- E + iE = I - E + e^{i\frac{\pi}{2}}E$, where $E$ is the idempotent obtained for $Sw$ and this is  $\begin{ssmatrix} 1 & 0&0&0 \\ 0 & \frac{1}{2} (1 + i) & \frac{1}{2} (1 - i) & 0 \\ 0 & \frac{1}{2} (1 - i) & \frac{1}{2} (1 +i) &0 \\ 0&0&0&1\end{ssmatrix} $ as usually given.

$(I-E+e^{i\frac{\pi}{4}}E) $ could be considered as `a square root of a square root of Swap'.



  \item{ \bf CNOT matrix:} CNOT matrix   is $CNOT= \begin{ssmatrix} 1&0&0&0 \\ 0 &1&0 &0 \\ 0&0&0&1 \\ 0&0&1&0\end{ssmatrix} $. This has eigenvalues $1$, three times, and eigenvalue $-1$. Let $E= \begin{ssmatrix} 0&0&0&0 \\ 0&0&0&0 \\ 0&0& \frac{1}{2} & -\frac{1}{2} \\ 0&0&-\frac{1}{2}& \frac{1}{2}\end{ssmatrix}$. Then $CNOT = (I-E+e^{i\pi}E)$. $E$ has rank $1$ and $F=I-E$, corresponding to eigenvalue $1$, has rank $3$. All the square roots of CNOT  are  $\{(I-E+e^{i\frac{\pi}{2}}E), (I-E+e^{i\frac{3\pi}{2}}E), (I-E+e^{i\frac{\pi}{2}}E)(I-F+e^{i\pi}F), (I-E+e^{i\frac{3\pi}{2}}E)(I-F+e^{i\pi}F)\}$.
  \item{\bf Bell non-symmetric:} The Bell non-symmetric matrix is $B=\begin{pmatrix} 0&1 \\ -1 & 0\end{pmatrix}$. This has eigenvalues $\{i,-i\}$. Then it is easy to show that $B= (I-E-iE)(I-F+iF)$ where $E= \frac{1}{2}\begin{pmatrix}1 &i\\-i & 1\end{pmatrix}, F= \frac{1}{2}\begin{pmatrix}1&-i \\ i&1 \end{pmatrix} = I-E$.
    Thus $B=(I-E-iE)(I-F+iF)= (I-E+e^{i\frac{3\pi}{2}}E)(I-F+e^{i\frac{\pi}{2}}F)$. Note that $\{E,F\}$ is complete. 
\end{itemize}
\section{Build matrices}\label{build1} 
    Normal  matrices are built from basic matrices of the form $(I-E+\al E)$; unitary matrices or quantum logic gates are  built from basic matrices of the form $(I-E+e^{i\theta}E)$. 
A particular property may  be required; for example a quantum logic gate which is  an $n^{th}$ root and/or a quantum logic gate with particular eigenvalues and  multiplicities may be required. 
Requirements may be met in a constructive manner.

It is  shown  in Proposition \ref{cor} that a symmetric (unitary matrix)/(quantum logic gate) is  of the form $(I-2E)$ for a symmetric  idempotent $E$. This makes building symmetric unitary matrices/logic gates particularly easy.    


    For example the matrix $U=\frac{1}{2}\begin{ssmatrix} 1&1&1&-1 \\ 1&1&-1&1 \\1&-1 &1 &1 \\ -1 &1&1&1 \end{ssmatrix}$ is the unitary matrix related to the  Hadamard matrix $2U$.  $U$ is symmetric and so  $U=(I-2E)$ where $E$ is easily shown to be $E=\frac{1}{4}\begin{ssmatrix} 1&-1&-1& 1 \\ -1 & 1&1 &-1\\ -1&1&1&-1 \\ 1&-1&-1 &1 \end{ssmatrix}$. It is clear that $\rank E=2$ and thus $\rank (I-E)= 2$ also. A square root of $U$ is $(I-E+e^{i\frac{\pi}{2}}E)$ but there are three other square roots which are easily worked out using the methods developed in Section \ref{root}. $U$ is not separable and roots of $U$ are not separable. 

    A unitary matrix $U$ which is also a root of $I$ may be built by the methods of Section \ref{roots}; in this case $U^n=I$ and $U^*= U^{n-1}$.
Suppose for example quantum logic gates (unitary matrices)  in $C_{4\times 4}$ are  required which are  $4^{th}$ roots of $I_4=I$. Clearly  $U= (I - E + e^{i\frac{2r\pi}{4}}E)$ where $E$ is any idempotent and $r\in \{1,2,3\}$, is a basic unitary quantum gate which is a $4^{th}$ root of $I$. Now $v_1=\frac{1}{2}(1,1,1,-1)\T, v_2=\frac{1}{2}(1,1,-1,1)\T, v_3=\frac{1}{2}(1,-1,1,1)\T, v_1=\frac{1}{2}(-1,1,1,1)\T$ is an orthonormal basis for $\C_4$ and these give an orthogonal complete set of idempotents  $E_1=v_1v_1^*, E_2= v_2v_2^*, E_3= v_3v_3^*, E_4=v_4v_4^*$. From these,  gates which are $4^{th}$ roots of $I$ may be formed:
  $\{ (I - E_1 + e^{i\frac{2r_1\pi}{4}}E_1)(I - E_2 + e^{i\frac{2r_2\pi}{4}}E_2)(I - E_3 + e^{i\frac{2r_3\pi}{4}}E_3)(I - E_4 + e^{i\frac{2r_1\pi}{4}}E_4)\}$ 
where $r_j\in \{0,1,2,3\}$. (If all $r_j=0$ then the identity matrix is obtained.)



Normal including unitary and Hermitian matrices may naturally be built from a set of unit orthogonal vectors.  Let $\{u_1,u_2, \ldots, u_k\}$ be a set of orthogonal unit column vectors in $\C_n$   
     and set  $E_i=u_iu_i^*$. Build $B=  (I-E_1+\al_1E_1)(I-E_2+\al_2E_2)\ldots (I-E_k+\al_kE_k)$ which is then normal. The basic matrices with the same scalars may be amalgamated by Lemma \ref{join} to get a unique expression $B=
    (I-F_1+\be_1F_1)(I-F_2+\be_2F_2)\ldots (I-F_s+\be_sF_s)$ where the $\be_i$ are distinct. Each $F_j$ is a sum $\sum u_{i_j}u_{i_j}^*$ of a subset of $\{u_1u_1^*, u_2u_2^*, \ldots, u_ku_k^*\}$. 
    Note that $S=\{u_1,u_2,\ldots, u_k\}$ need not be a full basis for $\C_n$; in this case $\{E_1,E_2,\ldots, E_k\}$ is not a complete set of orthogonal idempotents. By letting $E= I- \sum_{i=1}^{k}E_i$ a complete set $\{S \cup E\}$ of orthogonal idempotents is obtained. To build a unitary matrix take $\al_j=e^{i\theta_j}$ and to build a Hermitian matrix take the $\al_j$ to be real. If no $\al_i=0$ then no $\be_j=0$ and $B$ is invertible with inverse $B^{-1} = (I-F_1+\be_1^{-1}F_1)(I-F_2+\be_2^{-1}F_2)\ldots (I-F_{s}+\be_{s}^{-1}F_{s})$. If some $\be_j=0$, say $b_s=0$, then $B=(I-F_1+\be_1F_1)(I-F_2+\be_2F_2)\ldots (I-F_{s-1}+\be_{s-1}F_{s-1})(I-F_s)$. In this case the pseudo inverse of $B$ is $(I-F_1+\be_1^{-1}F_1)(I-F_2+\be_2^{-1}F_2)\ldots (I-F_{s-1}+\be_{s-1}^{-1}F_{s-1})(I-F_s)$. 
    

    Sets of orthogonal unit column $n\ti 1$ vectors may be  obtained from $n^{th}$ roots of unity 
    from which unitary matrices and logic gates for  various requirements may be  built; details are omitted.

    Comparisons may be made with the famous 1D unique building blocks  for paraunitary matrices  due to Belvitch and Vaidyanathan, see \cite{1D} pages 302-322.








\begin{thebibliography}{99}

    \bibitem{idemrank} Oskar M. Baksalary, Dennis S. Bernstein,
	Götz Trenkler, ``On the equality between rank and trace of an
	idempotent matrix'', Applied Mathematics and Computation, 217,
	4076-4080, 2010. 

\bibitem{tedbarry} Ted Hurley and Barry Hurley, ``Paraunitary matrices and group rings'', Intl. J.  Group Theory, Vol. 3, no.1, pp 31-56, 2015.  


 \bibitem{nielsen} Michael A. Nielsen,
Isaac Chuang, {\em Quantum Computation and Quantum Information}, Cambridge University Press, Cambridge UK, 2010.  

\bibitem{strang} Gilbert Strang and Truong Nguyen, {\em Wavelets and Filter Banks}, Wesley-Cambridge Press, 1997.

\bibitem{1D}  P. P. Vaidyanathan, {\em Multirate Systems and Filterbanks}, Prentice-Hall, 1993.

\end{thebibliography}
\end{document}